% Mathieu-Schulz-2-final
% "Additive spectrum preserving mappings from von Neumann algebras"
% Martin Mathieu and Francois Schulz
% version as of 31/12/2023

\documentclass[reqno,12pt]{amsart}
\usepackage{amssymb}
\usepackage{amsmath, mathrsfs,bm}
\usepackage{enumerate,color,comment}
\usepackage{hyperref}

\hfuzz=2pt \vfuzz=3pt

\addtolength{\textwidth}{3cm}
\addtolength{\oddsidemargin}{-1.5cm}
\addtolength{\evensidemargin}{-1.5cm}

\makeatletter
\@namedef{subjclassname@2020}{%
  \textup{2020} Mathematics Subject Classification}
\makeatother

\newtheorem{thrm}{Theorem}[section]
\newtheorem{cor}[thrm]{Corollary}
\newtheorem{lem}[thrm]{Lemma}
\newtheorem{prop}[thrm]{Proposition}

\theoremstyle{definition}

\newtheorem{remark}[thrm]{Remark}

\numberwithin{equation}{section}

\newcommand{\CC}{{\mathbb C}}

\newcommand{\RR}{{\mathbb R}}

\newcommand{\rest}[2]{{{#1}_{\kern-.5pt|{#2}}}}

\newcommand{\id}{{\textup{id}}}  % identity mapping
\def\C*{{\sl C*}-algebra}
\def\Cs*{{\sl C*}-subalgebra}
\DeclareMathOperator{\rad}{rad}

\begin{document}

\title[Additive spectrum preserving mappings from von Neumann algebras]{Additive spectrum preserving mappings from\\ von Neumann algebras}

\author{Martin Mathieu}
\address{Mathematical Sciences Research Centre, Queen's University Belfast, Bel\-fast BT7 1NN, Northern Ireland}
\email{m.m@qub.ac.uk}
\author{Francois Schulz}
\address{Department of Mathematics and Applied Mathematics, Faculty of Science, University of Johannesburg,
P.O. Box 524, Auckland Park, 2006, South Africa}
\email{francoiss@uj.ac.za}

\subjclass[2020]{47B48, 47A10, 46L05, 46L30, 16W10, 17C65}
\keywords{Von Neumann algebras, Jordan isomorphisms, spectrum preserving mappings}

\date{\today}

\begin{abstract}
We establish Jafarian's 2009 conjecture that every additive spectrum preserving mapping from a von Neumann algebra
onto a semisimple Banach algebra is a Jordan isomorphism.
\end{abstract}

\maketitle

\section{Introduction}\label{sect:intro}
\noindent
In ~\cite{jafasurvey}, Jafarian asked whether a surjective unital additive mapping between two von Neumann algebras that preserves the spectrum of each element
is a Jordan isomorphism. It is the purpose of this note to answer his question in the affirmative.

For linear mappings, this was established by Aupetit in~\cite{Aup00}. Therefore, the main task is to prove that an additive mapping satisfying the above conditions
is indeed linear (by which we shall always mean \textit{complex-linear\/} in the following and emphasize \textit{real-linear\/} where necessary).
The study of the interplay between ring-theoretic and algebraic properties has a long history.
Already in 1954, Kaplansky showed that the domain of a ring isomorphism $\phi$ between two semisimple complex Banach algebras is a direct sum of a finite-dimensional
subalgebra and two subalgebras on which $\phi$ is either linear or conjugate-linear~\cite{kap1954}.
In the past few decades, the question when an additive mapping is automatically linear has been studied intensively in the context of preserver problems;
see, e.g., the surveys in~\cite{bourmash2015} and~\cite{bourmash2020}.
For example, the above-stated problem has been solved for the algebra of all bounded linear operators by Omladi\v c and \v Semrl in~\cite{omlsemrl1991}.

A breakthrough was obtained by Kowalski and S\l odkowski~\cite{KowalSlod} when they showed that every spectrally-additive scalar-valued mapping $\phi$
on a complex Banach algebra $A$ is a character. The assumption here is that $\phi(x)+\phi(y)\in\sigma(x+y)$ for all $x,y$ in~$A$,
where $\sigma(\,\cdot\,)$ denotes the spectrum. Their techniques were refined and extended by a number of authors subsequently, notably in~\cite{CostRep},
where a positive answer to Jafarian's question is obtained in the finite-dimensional case (under somewhat weaker hypotheses).
These techniques will be the basis for our approach too.
This consists, firstly, in carefully extending a variety of results which were previously known for linear spectrum preserving mappings to the additive setting,
largely for general semisimple Banach algebras. Using the by-now well-known fact that a continuous, unital, linear mapping which preserves idempotents is a
Jordan homomorphism, provided the domain is rich in idempotents, we turn our attention to \C*s of real rank zero.
In a final step, the techniques hinted at above together with some classical results on (Jordan) derivations and automorphisms will provide the
desired answer (Theorem~\ref{youbeauty}).

In the original formulation of his conjecture \cite[Conjecture~2]{jafasurvey}, Jafarian asked for a characterisation of \textit{invertibility
preserving\/} additive mappings as Jordan isomorphisms. This, however, is not possible as an example in \cite{omlsemrl1991} shows;
such mappings need not be continuous.

\section{Some Properties of Additive Spectral Preservers}\label{sect:prop}
\noindent
Throughout we assume that $A$ and $B$ are at least complex unital Banach algebras.
We denote by $Z(A)$ the centre of an algebra $A$; by $\rad(A)$ its Jacobson radical; and by $G(A)$ its group of invertible elements.
For an element $x\in A$, $r(x)$ stands for its spectral radius.
In this section, we collect various observations which are well known for linear mappings but turn out to be valid
in a much wider setting.

\begin{prop}\label{propcent}
Suppose that $A$ is semisimple and that $T\colon A\to B$ is a surjective map with the property that $r(x+y) = r(Tx+Ty)$ for all $x, y\in A$.
Then $B$ is semisimple and $Z(B) = T(Z(A))$.
\end{prop}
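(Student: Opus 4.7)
The plan is to invoke classical characterisations of the Jacobson radical and of the centre of a semisimple Banach algebra that involve only the spectral radius, so that the hypothesis $r(x+y)=r(Tx+Ty)$ transports them between $A$ and $B$. Throughout I treat $T$ as additive, which is the standing assumption of the section; then $T0=0$, and specialising the hypothesis to $y=0$ yields $r(Tx)=r(x)$ for every $x\in A$.

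For the semisimplicity of $B$ I would use Zemánek's characterisation of the radical: an element $u$ of a Banach algebra lies in $\rad$ if and only if $r(u+v)=r(v)$ for every $v$. Given $b\in\rad(B)$, surjectivity provides $x\in A$ with $Tx=b$, and for any $y\in A$
$$r(x+y)=r(Tx+Ty)=r(b+Ty)=r(Ty)=r(y),$$
where the third equality uses $b\in\rad(B)$ and the last is the identity above. Hence $x\in\rad(A)=\{0\}$, so $b=Tx=0$, proving $\rad(B)=\{0\}$.

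For the equality $Z(B)=T(Z(A))$ I would invoke the classical Zemánek--Aupetit characterisation of the centre: in a semisimple Banach algebra, $z$ is central if and only if $r(z+w)\le r(z)+r(w)$ for every $w$. Fix $z\in A$; surjectivity parametrises every $v\in B$ as $v=Ty$, and the spectral-radius identities rewrite the condition
$$r(Tz+v)\le r(Tz)+r(v)\text{ for all }v\in B$$
as
$$r(z+y)\le r(z)+r(y)\text{ for all }y\in A.$$
Since $A$ and $B$ are now both semisimple, the criterion makes the former equivalent to $Tz\in Z(B)$ and the latter to $z\in Z(A)$; thus $Tz\in Z(B)\iff z\in Z(A)$, and surjectivity of $T$ yields $Z(B)=T(Z(A))$.

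The main obstacle is really one of reference: once the two spectral-radius characterisations are in hand, everything else is the formal transport of conditions across the identity $r(x+y)=r(Tx+Ty)$. A minor but genuine subtlety is that additivity of $T$ is used, via $T0=0$, to upgrade the bivariate identity to the univariate $r(Tx)=r(x)$; without additivity one can extract only $r(T0)=0$, which would not suffice to pin down the radical of $B$.
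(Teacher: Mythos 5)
Your argument is essentially the paper's: both proofs transport spectral-radius characterisations of the radical and of the centre across the identity $r(x+y)=r(Tx+Ty)$. You use the forms ``$b\in\rad(B)$ iff $r(b+v)=r(v)$ for all $v$'' and ``$z$ central iff $r(z+w)\le r(z)+r(w)$ for all $w$'', while the paper uses the quasinilpotent form of Zem\'anek's theorem and the bound $r(z+w)\le M(1+r(w))$ from \cite[Theorem 5.2.2]{aupetit1991primer}; these variants are interchangeable and reduce to the same two theorems. The one substantive discrepancy is your appeal to additivity. The proposition is stated for an arbitrary surjective map --- additivity is \emph{not} a standing assumption of Section~\ref{sect:prop}, whose only blanket hypothesis is that $A$ and $B$ are unital complex Banach algebras --- and the paper's proof deliberately avoids it: putting $y=x$ in the hypothesis gives $r(2x)=r(2Tx)$, hence $r(x)=r(Tx)$, with no need for $T(0)=0$. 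Your closing remark that without additivity one can only extract $r(T0)=0$ is therefore mistaken, and as written your proof establishes the proposition only under an extra hypothesis the statement does not make. The repair is immediate: use $y=x$ to get $r(x)=r(Tx)$, run your radical argument to conclude that every $b\in\rad(B)$ equals $T(0)$, and then observe that $\{0\}\subseteq\rad(B)\subseteq\{T(0)\}$ forces $T(0)=0$ and $\rad(B)=\{0\}$ a posteriori, exactly as the paper does; the centre argument then goes through verbatim.
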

\begin{proof}
Firstly, notice that $r(x+x) = r(Tx+Tx)$ for all $x \in A$ implies that $T$ preserves the spectral radius, i.e., $r(x) = r(Tx)$ for all $x \in A$.
In particular, this means that $q$ is quasinilpotent in $A$ if and only if $Tq$ is quasinilpotent in~$B$.
Since $T$ is surjective, every element in $\rad(B)$ has the form $Ta$ for some $a \in A$
and it follows that
	$$r\left(Ta + Tq\right) = 0 \mbox{ for each quasinilpotent }q \in A.$$
Thus, by hypothesis, we obtain
	$$r(a+q) = 0 \mbox{ for each quasinilpotent }q \in A.$$
Since $A$ is semisimple, an application of Zem\'anek's characterization of the radical  \cite[Theorem 5.3.1]{aupetit1991primer}
yields that $a = 0$. Hence, $Ta = T(0)$.
This shows that $\{0\}\subseteq\rad(B)\subseteq\{T(0)\}$ which entails that $T(0)=0$ and that $B$ is semisimple.

For the second part, assume that $x \in Z(A)$. By \cite[Theorem 5.2.2]{aupetit1991primer}, this is characterized by the fact that there exists some $M > 0$ such that
	$$r(x+y) \leq M(1+r(y)) \mbox{ for all }y \in A.$$
	Consequently, by hypothesis, we have
	$$r(Tx + Ty) \leq M(1+r(Ty)) \mbox{ for all }y \in A.$$
	Because $T$ is surjective, we can now use \cite[Theorem 5.2.2]{aupetit1991primer} once more to conclude that $Tx \in Z(B)$.
In a similar way one infers that $x \in Z(A)$ whenever $Tx \in Z(B)$. Hence, $Z(B) = T\left(Z(A)\right)$ as desired.
\end{proof}
The following result is the starting point in~\cite{Askes2022}. We point out that statement~(b) depends on \cite[Theorem 2.2]{BraatBrit}.
\begin{prop}[{\cite[Proposition 1.3]{Askes2022}}]\label{propbasic}
Suppose that $A$ is semisimple and that $T\colon A\to B$ is a surjective map which is spectrally additive, that is, $\sigma(x+y) = \sigma(Tx+Ty)$ for each $x, y\in A$. Then
	\begin{itemize}
		\item[\textnormal{(a)}]
		$\sigma(x) = \sigma(Tx)$ for each $x \in A$. Consequently, $T(G(A)) = G(B)$.
		\item[\textnormal{(b)}]
		$T$ is injective.
		\item[\textnormal{(c)}]
		$T \left(\lambda \mathbf{1} + x\right) = \lambda \mathbf{1} + T(x)$ for each $\lambda \in \CC$ and $x \in A$.
        In particular, $T(0)=0$, $T$ is unital and homogeneous at~$\mathbf{1}$.
	\end{itemize}
\end{prop}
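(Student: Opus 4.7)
My plan is to establish (a), (c), and (b) in that order, each following from a short manipulation of the hypothesis $\sigma(x+y)=\sigma(Tx+Ty)$ combined with Zem\'anek's characterisation of the radical and the conclusions of Proposition~\ref{propcent} (namely that $B$ is semisimple and $T(0)=0$, both of which hold since spectral additivity trivially implies spectral radius additivity).

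For (a), I would simply put $y=x$ in the spectral identity to obtain $2\sigma(x)=\sigma(2x)=\sigma(2Tx)=2\sigma(Tx)$, hence $\sigma(x)=\sigma(Tx)$ for every $x\in A$. Since $x\in G(A)\Leftrightarrow0\notin\sigma(x)\Leftrightarrow0\notin\sigma(Tx)\Leftrightarrow Tx\in G(B)$, and $T$ is surjective, $T(G(A))=G(B)$ follows at once.

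For (c), fix $\lambda\in\CC$ and $x\in A$, and consider $u:=T(\lambda\mathbf{1}+x)-\lambda\mathbf{1}-Tx\in B$. For any $y\in A$, using translation invariance of the spectrum and spectral additivity twice,
\[
\sigma\bigl(T(\lambda\mathbf{1}+x)+Ty\bigr)=\sigma(\lambda\mathbf{1}+x+y)=\lambda+\sigma(x+y)=\lambda+\sigma(Tx+Ty)=\sigma(\lambda\mathbf{1}+Tx+Ty),
\]
so $\sigma(u+Ty)=\sigma(Ty)$ for all $y\in A$. Surjectivity of $T$ upgrades this to $\sigma(u+b)=\sigma(b)$ for every $b\in B$. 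Taking $b=0$ shows $u$ is quasinilpotent; taking $b=q$ for an arbitrary quasinilpotent $q\in B$ shows $u+q$ is quasinilpotent. Zem\'anek's theorem (\cite[Theorem 5.3.1]{aupetit1991primer}), together with semisimplicity of $B$, then forces $u\in\rad(B)=\{0\}$, so $T(\lambda\mathbf{1}+x)=\lambda\mathbf{1}+Tx$. Specialising to $x=0$ and using $T(0)=0$ from Proposition~\ref{propcent} yields $T(\lambda\mathbf{1})=\lambda\mathbf{1}$, which is unitality at $\lambda=1$ and homogeneity at~$\mathbf{1}$.

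For (b), suppose $Tx=Ty$. Then $\sigma(x+z)=\sigma(Tx+Tz)=\sigma(Ty+Tz)=\sigma(y+z)$ for every $z\in A$, and the substitution $z\mapsto z-y$ gives $\sigma((x-y)+z)=\sigma(z)$ for all $z\in A$. Running exactly the same Zem\'anek argument as in (c), now inside the semisimple algebra $A$, yields $x-y\in\rad(A)=\{0\}$, i.e.\ $x=y$. I don't foresee a substantive obstacle; the whole proposition is a formal consequence of spectral additivity once Zem\'anek's radical characterisation is in hand, the only mildly delicate point being that one must first ensure $T(0)=0$ and $B$ semisimple, both already secured by Proposition~\ref{propcent}.
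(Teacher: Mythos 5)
Your proof is correct, but it is worth noting that the paper itself gives no proof of this proposition: it is imported verbatim from \cite[Proposition~1.3]{Askes2022}, and the authors explicitly flag that part~(b) there rests on \cite[Theorem~2.2]{BraatBrit}, a uniqueness-under-spectral-variation result. Your argument is therefore a genuinely self-contained alternative. For~(b) you exploit the fact that the hypothesis delivers the \emph{global} identity $\sigma((x-y)+z)=\sigma(z)$ for all $z\in A$, which is strong enough to feed directly into Zem\'anek's characterisation of the radical; the BraatBrit route is designed to work from weaker, local spectral information, so your shortcut is legitimate here and arguably more elementary. Parts~(a) and~(c) follow the standard pattern (and (c) is essentially the same Zem\'anek argument run in $B$), consistent with what one expects from the cited source. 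Two small points of hygiene: first, in~(c) the displayed chain gives $\sigma\bigl(T(\lambda\mathbf{1}+x)+Ty\bigr)=\sigma\bigl(\lambda\mathbf{1}+Tx+Ty\bigr)$, and passing from this to $\sigma(u+Ty)=\sigma(Ty)$ already requires the surjectivity of $T$ (to substitute $Ty-\lambda\mathbf{1}-Tx$ for $Ty$); you invoke surjectivity one sentence too late, though the repair is immediate. Second, your reliance on Proposition~\ref{propcent} for $T(0)=0$ and the semisimplicity of $B$ is not circular, since spectral additivity implies the spectral-radius hypothesis of that proposition; it would be worth saying so explicitly, as the statement of~(c) lists $T(0)=0$ among its conclusions.
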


Combining Proposition~\ref{propcent} and part (c) in Proposition~\ref{propbasic}
with \cite[Theorem 2.1]{Hatori2010} we now obtain the following corollaries.
\begin{cor}
If $T\colon A\to B$ is a surjective spectrally additive map, and $A$ is commutative and semisimple, then $T$ is an algebra isomorphism.
\end{cor}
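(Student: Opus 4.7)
The plan is to reduce the corollary to a direct application of \cite[Theorem 2.1]{Hatori2010} by carefully verifying that all of its hypotheses are met in our setting. The three ingredients will be: commutativity and semisimplicity of the codomain (from Proposition \ref{propcent}), unitality (from Proposition \ref{propbasic}(c)), and of course spectral additivity (which is the hypothesis).

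First I would observe that taking spectral radii in the identity $\sigma(x+y)=\sigma(Tx+Ty)$ gives $r(x+y)=r(Tx+Ty)$ for all $x,y\in A$, so the hypotheses of Proposition \ref{propcent} are satisfied. This yields that $B$ is semisimple and $Z(B)=T(Z(A))$. Since $A$ is commutative, $Z(A)=A$, and surjectivity of $T$ then forces $Z(B)=T(A)=B$, so $B$ is commutative and semisimple as well.

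Next, Proposition \ref{propbasic}(c) gives that $T$ is unital (indeed, $T(\lambda\mathbf{1}+x)=\lambda\mathbf{1}+Tx$ for every $\lambda\in\CC$ and $x\in A$), while part (a) of the same proposition gives $\sigma(Tx)=\sigma(x)$ for all $x\in A$ together with $T(G(A))=G(B)$, and part (b) yields injectivity. Thus $T\colon A\to B$ is a unital bijection between two commutative unital semisimple Banach algebras which is spectrally additive.

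These are precisely the hypotheses required to invoke \cite[Theorem 2.1]{Hatori2010}, from which the conclusion that $T$ is an algebra isomorphism is immediate. The only real ``obstacle'' here is the bookkeeping to verify that the Hatori theorem applies; once Propositions \ref{propcent} and \ref{propbasic} are in hand, no new ideas are needed, and the commutative case of Jafarian's problem follows at once.
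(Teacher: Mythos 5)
Your proposal is correct and follows exactly the route the paper takes: deduce commutativity and semisimplicity of $B$ from Proposition~\ref{propcent}, obtain unitality from Proposition~\ref{propbasic}(c), and then invoke \cite[Theorem 2.1]{Hatori2010}. No substantive differences.
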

\begin{cor}\label{corcent}
If $A$ is semisimple and $T\colon A\to B$ is a surjective map which is unital, homogeneous at $\mathbf{1}$,
and has the property that $r(x+y) = r(Tx+Ty)$ for all $x, y \in A$,
then the restriction $T_|\colon Z(A)\to Z(B)$ is an algebra isomorphism.
In particular, the above hypothesis on $T$ is satisfied if\/ $T\colon A \to B$ is a surjective spectrally additive map.
\end{cor}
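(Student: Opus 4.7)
The plan is to reduce the statement to Hatori's theorem \cite[Theorem 2.1]{Hatori2010} by restricting $T$ to the centre. Proposition~\ref{propcent} already supplies the two key ingredients I need: since the hypothesis $r(x+y) = r(Tx+Ty)$ is precisely what that proposition requires, it follows that $B$ is semisimple and that $T(Z(A)) = Z(B)$. Hence the restriction $T_|\colon Z(A) \to Z(B)$ is a well-defined surjection between two commutative, unital, semisimple Banach algebras.

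Next I would check that the three remaining hypotheses on $T$ descend to $T_|$ with no extra work. Unitality and homogeneity at $\mathbf{1}$ transfer immediately because $\mathbf{1} \in Z(A)$, and the identity $r(x+y) = r(Tx+Ty)$ is inherited by restricting $x, y$ to $Z(A)$. With all hypotheses in place, I apply \cite[Theorem 2.1]{Hatori2010} in the commutative semisimple setting to conclude that $T_|$ is an algebra isomorphism.

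For the \emph{In particular} clause, I would invoke Proposition~\ref{propbasic}(c): spectral additivity of $T$ delivers $T(\lambda\mathbf{1}) = \lambda\mathbf{1}$ for every $\lambda\in\CC$, hence $T$ is unital and homogeneous at $\mathbf{1}$; and the identity $\sigma(x+y) = \sigma(Tx+Ty)$ trivially entails the weaker spectral-radius identity $r(x+y) = r(Tx+Ty)$. So the hypotheses of the first half are met, and the conclusion follows.

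I do not expect a serious obstacle here: the proof is essentially a bookkeeping combination of Proposition~\ref{propcent}, Proposition~\ref{propbasic}(c), and the cited theorem of Hatori. The one point deserving a little care is confirming that \cite[Theorem 2.1]{Hatori2010} genuinely applies under exactly these hypotheses (a surjective, unital, homogeneous-at-$\mathbf{1}$ map between commutative semisimple unital Banach algebras that preserves the spectral radius of sums), so that no hidden linearity or continuity assumption is being smuggled in. Assuming this, the corollary drops out immediately.
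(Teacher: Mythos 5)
Your proposal is correct and follows exactly the route the paper intends: the paper derives this corollary by "combining Proposition~\ref{propcent} and part (c) in Proposition~\ref{propbasic} with \cite[Theorem 2.1]{Hatori2010}," which is precisely your reduction to Hatori's theorem on the centres. The only detail worth a half-sentence in a written version is that $Z(A)$ and $Z(B)$ are themselves semisimple (standard, since a central quasinilpotent lies in the radical), so Hatori's hypotheses are indeed met.
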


The next result leads to an extension of Aupetit's lemma on the automatic continuity of surjective spectrally bounded operators
(see \cite[Theorem 5.5.1, Theorem 5.5.2]{aupetit1991primer}). By modifying Aupetit's proof slightly, it is possible to show that any
surjective additive spectrum preserving map on a semisimple Banach algebra is continuous.
In fact, it is enough to know that the additive map in question preserves the spectral radius. The details are as follows.

\begin{lem}\label{autocontlem}
Suppose that $T\colon A\to B$ is a surjective additive map that preserves the spectral radius. Then
$$b\in\mathscr{S}(T):= \left\{y\in B:\lim_{n\rightarrow\infty} x_n = 0\mbox{ and }\lim_{n\rightarrow\infty} Tx_n = y\mbox{ for some }(x_n)\subseteq A\right\}$$
implies $r(y)\leq r(b+y)$ for all $y\in B$. In particular, $\mathscr{S}(T)$ is contained in the set of quasinilpotent elements of~$B$.
\end{lem}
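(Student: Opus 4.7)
The plan is to adapt Aupetit's argument for the automatic continuity of linear, spectrally bounded maps (see \cite[Section 5.5]{aupetit1991primer}) to the additive setting, using Vesentini's subharmonicity theorem as the key tool.

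Given $b\in\mathscr{S}(T)$, I fix a witness sequence $(x_n)\subseteq A$ with $x_n\to 0$ and $Tx_n\to b$. For $y\in B$, surjectivity provides $y=Ta$ for some $a\in A$, and the spectral radius preservation hypothesis gives $r(a)=r(y)$. For $\lambda\in\CC$ I then consider the analytic family $a+\lambda x_n$ in $A$: by Vesentini's theorem, $f_n(\lambda):=r(a+\lambda x_n)$ is subharmonic on $\CC$, and the norm estimate $f_n(\lambda)\leq\|a\|+|\lambda|\sup_m\|x_m\|$ makes $(f_n)$ locally uniformly bounded above.

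Since $a+\lambda x_n\to a$ for each fixed $\lambda$, upper semicontinuity of the spectral radius gives $\limsup_n f_n(\lambda)\leq r(a)=r(y)$ for every $\lambda\in\CC$. The additivity of $T$ enters at rational $\lambda$: the $\QQ$-linearity $T(a+\lambda x_n)=Ta+\lambda Tx_n$, combined with spectral radius preservation, gives $f_n(\lambda)=r(Ta+\lambda Tx_n)$ for $\lambda\in\QQ$. In particular, at $\lambda=1$, upper semicontinuity in $B$ (via $Ta+Tx_n\to y+b$) yields
$$\limsup_{n\to\infty}r(a+x_n)=\limsup_{n\to\infty}r(Ta+Tx_n)\leq r(y+b).$$

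The main obstacle is to convert this information---the constant value $f_n(0)=r(y)$ together with the sequential bound $\limsup_n f_n(1)\leq r(y+b)$---into the required inequality $r(y)\leq r(y+b)$. Following Aupetit, one considers the upper semicontinuous regularization $\psi^*$ of $\limsup_n f_n$, which is itself subharmonic by a standard result on locally bounded sequences of subharmonic functions. Since $\psi^*\leq r(y)$ on $\CC$ while $\psi^*(0)=r(y)$, the maximum principle forces $\psi^*\equiv r(y)$ on~$\CC$. A Hartogs-type transfer of the sharper rational-$\lambda$ inequality to the regularization then gives $\psi^*(1)\leq r(y+b)$, and hence $r(y)=\psi^*(1)\leq r(y+b)$. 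The delicate point is this transfer step, because the identity $f_n(\lambda)=r(Ta+\lambda Tx_n)$ is available only on the dense but Lebesgue-null set $\QQ\subseteq\CC$; in the linear case treated by Aupetit the identity holds throughout $\CC$, and the argument is correspondingly cleaner.

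The final assertion is then immediate: applying the main inequality with $y=-b\in B$ gives $r(-b)\leq r(-b+b)=r(0)=0$, so $r(b)=0$ and every element of $\mathscr{S}(T)$ is quasinilpotent.
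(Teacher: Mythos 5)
Your overall architecture (Vesentini's theorem, upper regularization, Liouville/maximum principle) is the right one, and your deduction of the final assertion from the main inequality is fine, but the step you yourself flag as ``delicate'' is a genuine gap, not a technicality. You place the subharmonic family on the domain side, $f_n(\lambda)=r(a+\lambda x_n)$, and the comparison $f_n(\lambda)=r(Ta+\lambda Tx_n)$ is available only for $\lambda$ in $\QQ$ (or $\QQ+i\QQ$), because an additive map is merely $\QQ$-homogeneous; you cannot invoke $\RR$-linearity here without continuity, which is exactly what this lemma is meant to deliver. Consequently the bound $\phi(\mu)=\limsup_n f_n(\mu)\leq r(y+\mu b)$ is known only on a countable set. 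A countable set is polar, and a subharmonic function is controlled by its values on the \emph{complement} of a polar set, never by its values on one; so the regularization $\psi^*(1)=\limsup_{\mu\to1}\phi(\mu)$ sees irrational $\mu$ arbitrarily close to $1$, where you have no bound better than $r(y)$. Your chain of inequalities therefore reads $r(y)=\psi^*(1)\geq\phi(1)$ together with $\phi(1)\leq r(y+b)$, which point the wrong way and cannot be combined; no Hartogs- or Brelot--Cartan-type statement rescues this, since the exceptional polar set in those results may well contain all of $\QQ+i\QQ$.

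The paper's proof avoids the problem by a different use of surjectivity. It keeps the subharmonic family on the codomain side, $\phi_n(\lambda)=r(\lambda Tx_n+Tx)$, and for each fixed $\lambda\neq0$ chooses $a_\lambda\in A$ with $Tx=\lambda Ta_\lambda$; then additivity and the homogeneity of the spectral radius (never that of $T$) give
$r(\lambda Tx_n+Tx)=|\lambda|\,r\bigl(T(x_n+a_\lambda)\bigr)=|\lambda|\,r(x_n+a_\lambda)=r(\lambda x_n+\lambda a_\lambda)$.
Upper semicontinuity in $A$ then yields $\limsup_n\phi_n(\lambda)\leq r(\lambda a_\lambda)=r(x)$ for \emph{every} complex $\lambda$, so Liouville's theorem applies to the regularization, while upper semicontinuity of $\mu\mapsto r(\mu b+Tx)$ in $B$ controls that regularization from above at every point, in particular at $\lambda=1$. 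If you re-run your argument with the family defined in $B$ and this choice of $a_\lambda$, the transfer step disappears.
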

\begin{proof}
Let $b \in \mathscr{S}(T)$ be given, and assume that $(x_n)\subseteq A$ satisfies
$$\lim_{n \rightarrow \infty} x_n = 0 \mbox{ and } \lim_{n \rightarrow \infty} Tx_n = b.$$
Let $x \in A$ and $\lambda \in \mathbb{C}\setminus\{0\}$ be arbitrary. Since $T$ is surjective, there exists some $a \in A$ (depending on $\lambda$)
such that $Tx = \lambda Ta$. In particular, $r(Tx) = |\lambda| r(Ta)$. Thus, since $T$ preserves the spectral radius,
it follows that $r(x) = |\lambda|r(a) = r(\lambda a)$.
From $\lim\limits_{n \rightarrow \infty} \lambda x_n + \lambda a = \lambda a$ and
$$r\left(\lambda Tx_n + Tx \right)  =  r\left(\lambda Tx_n + \lambda Ta \right)
 =  |\lambda|r\left(Tx_n + Ta \right)
 =  r\left(\lambda x_n + \lambda a \right)$$
we obtain
$$\limsup_{n \rightarrow \infty} r\left(\lambda Tx_n + Tx \right) = \limsup_{n \rightarrow \infty} r\left(\lambda x_n + \lambda a \right) \leq r(\lambda a) = r(x),$$
by the upper semicontinuity of the spectral radius on~$A$.
Since $\limsup\limits_{n \rightarrow \infty} r\left(\lambda Tx_n + Tx \right) = r(Tx) = r(x)$ if $\lambda = 0$, we conclude that
$$\limsup_{n \rightarrow \infty} r\left(\lambda Tx_n + Tx \right) \leq r(x) \mbox{ for all } \lambda \in \mathbb{C}.$$
We now proceed exactly as in Aupetit's proof of \cite[Theorem 5.5.1]{aupetit1991primer}.
For each $n \in \mathbb{N}$, define $\phi_n(\lambda) = r\left(\lambda Tx_n + Tx \right)$ for each $\lambda \in \mathbb{C}$
which is subharmonic by Vesentini's theorem \cite[Theorem 3.4.7]{aupetit1991primer}. In particular, we have that
$$\phi(\lambda) = \limsup_{n \rightarrow \infty} \phi_n(\lambda) \leq r(x)$$
satisfies the mean inequality on $\mathbb{C}$, but in general it is not upper semicontinuous. We therefore consider its upper regularization,
$$\psi(\lambda) = \limsup_{\mu \rightarrow \lambda} \phi(\mu),$$
which is subharmonic on $\mathbb{C}$. Observe now that $\phi(\lambda) \leq \psi(\lambda)\leq r(x)$ for all $\lambda\in\mathbb{C}$.
Hence, by Liouville's theorem for subharmonic functions \cite[Theorem A.1.11]{aupetit1991primer}, we infer that $\psi$ is constant on~$\mathbb{C}$. So,
$$r(Tx) = \phi(0) \leq \psi(0) = \psi(\lambda) \mbox{ for all } \lambda \in \mathbb{C}.$$
Using the upper semicontinuity of $r$ on $B$ we now infer that
$$\phi(\lambda) \leq r(\lambda b + Tx) \mbox{ for all } \lambda \in \mathbb{C},$$
and consequently that
$$\psi(\lambda) \leq \limsup_{\mu \rightarrow \lambda} r(\mu b + Tx) \leq r(\lambda b + Tx) \mbox{ for all } \lambda \in \mathbb{C}.$$
Together we obtain that $r(Tx) \leq r(\lambda b + Tx)$ for all $\lambda \in \mathbb{C}$. This holds in particular for $\lambda = 1$.
For the last part of the lemma, let $b \in \mathscr{S}(T)$ with $b = Tu$ for some $u \in A$. Taking $x = -u$ and using the additivity of $T$, we get $r(b) = 0$ as desired.
\end{proof}

\begin{thrm}\label{autocontthm}
Let $A$ be semisimple. Suppose that $T\colon A\to B$ is a surjective additive map that preserves the spectral radius. Then $T$ is continuous.
\end{thrm}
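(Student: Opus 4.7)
The plan is to deduce continuity from a closed-graph argument. Since $T$ is additive, I would first reduce to showing that its separating space $\mathscr{S}(T)$ is trivial; this is equivalent to the graph of $T$ being closed in $A\times B$, from which continuity of $T$ should follow via a suitable closed graph-type theorem.

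To verify $\mathscr{S}(T)=\{0\}$, I would pick $b\in\mathscr{S}(T)$ and use the surjectivity of $T$ to write $b = Tu$ for some $u\in A$. Lemma~\ref{autocontlem} then supplies the inequality $r(y)\leq r(b+y)$ for every $y\in B$. Applying this with $y=Tx$, and exploiting the additivity of $T$ together with its preservation of the spectral radius, gives
$$r(x) = r(Tx) \leq r(b+Tx) = r(Tu+Tx) = r(T(u+x)) = r(u+x) \mbox{ for every } x \in A.$$

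Substituting $x = w-u$ then yields $r(w-u)\leq r(w)$ for all $w\in A$. By Zem\'anek's characterisation of the radical \cite[Theorem 5.3.1]{aupetit1991primer} together with the semisimplicity of $A$, this forces $u=0$; since $T(0)=0$ by additivity, we conclude $b = Tu = 0$, so that $\mathscr{S}(T)=\{0\}$.

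The main obstacle I anticipate lies in the final continuity step: unlike the linear case, passing from a closed graph to continuity for a merely additive map between Banach spaces is not an immediate consequence of the usual closed graph theorem. One must either invoke the open mapping theorem for abelian Polish groups (applied to the continuous bijection from the closed graph onto $A$), or mimic the classical Baire-category proof while exploiting that additivity makes $T$ automatically $\mathbb{Q}$-linear, so that the dyadic scaling step inside that proof still goes through.
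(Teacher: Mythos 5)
Your proof is correct and follows essentially the same route as the paper: both establish $\mathscr{S}(T)=\{0\}$ by combining Lemma~\ref{autocontlem} with Zem\'anek's characterisation of the radical, and then pass from the trivial separating space to continuity via a closed-graph argument for additive (i.e.\ $\QQ$-linear) maps. The only cosmetic difference is that you transport the inequality back to $A$ and conclude $u=0$ there using the semisimplicity of $A$ directly, whereas the paper concludes $b=0$ in $B$ after first noting via Proposition~\ref{propcent} that $B$ is semisimple; your closing caveat about justifying the closed-graph step for merely additive maps is well placed, as the paper passes over that point in silence.
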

\begin{proof}
By hypothesis and Proposition~\ref{propcent}, it follows that $B$ is semisimple. Let $b \in \mathscr{S}(T)$ with $b = Tu$.
Taking $x = a-u$, we get $r(Ta-b) \leq r(Ta)$ for all $a \in A$. Consequently, $r(b+q) = 0$ for all quasinilpotent elements $q$ of $B$.
By Zem\'anek's characterization of the radical, we conclude that $b \in \rad(B) = \{0\}$.
This shows that $\mathscr{S}(T) = \{0\}$. Since $T$ is additive, and thus $T(-a)=-Ta$ for all~$a$, it follows that $Tx_n \rightarrow Tx$ in $B$
whenever $x_n \rightarrow x$ in $A$. Hence, $T$ is continuous.
\end{proof}
\begin{remark}\label{rem:just-semisimple}
It is easy to see that Theorem~\ref{autocontthm} remains valid under the (potentially) weaker assumption that $B$ is semisimple,
just as in Aupetit's original result. On the other hand, it is not sufficient to assume that the additive mapping is merely invertibility preserving,
as the example in~\cite{omlsemrl1991} mentioned at the end of our Introduction shows.
\end{remark}
In~\cite{parv2017}, an erroneous proof of Theorem~\ref{autocontthm} under the stronger assumption of spectrum preserving appeared.
However, the attempted adaptation of Ransford's beautiful proof of Aupetit's lemma (see \cite[proof of 6.4.4]{ransford1995}) fails and therefore the main result of that paper
(a partial positive answer to Jafarian's question) fails as well.

\smallskip
Every continuous additive mapping between complex Banach algebras is linear over~$\RR$.
Consequently, such a map is readily seen to be bounded. Assume now that $T\colon A \to B$ is a surjective additive spectrum preserving map with $A$ semisimple.
Then, by Proposition~\ref{propbasic}, $B$ is semisimple and $T^{-1}$ exists and is an additive spectrum preserving map.
Hence, by the preceding discussion, we conclude that there exist $\alpha > 0$ and $\beta > 0$ such that
\begin{equation}\label{eq:bdd-below}
\alpha \|x\| \leq \|Tx\| \leq \beta \|x\| \mbox{ for all } x \in A.
\end{equation}
We are now in a position to take the following important step forward.
\begin{prop}\label{idempotentthm}
Let $A$ be semisimple. Suppose that $T\colon A \to B$ is a surjective additive spectrum preserving map. Then $T$ maps idempotents in $A$ to idempotents in~$B$.
\end{prop}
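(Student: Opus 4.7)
The plan is to show $(Tp)^2 = Tp$ for every idempotent $p \in A$. Writing $q := Tp$, the cases $p = 0$ and $p = \mathbf{1}$ follow immediately from Proposition~\ref{propbasic}(c), so I would assume $\sigma(p) = \{0,1\}$; then $\sigma(q) = \{0,1\}$ by Proposition~\ref{propbasic}(a), and in particular $q^2 - q$ is quasinilpotent. Since $B$ is semisimple by Proposition~\ref{propcent}, it would suffice to upgrade this to the statement $q^2 - q \in \rad(B)$.

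The tools available are the following: $T$ is continuous (Theorem~\ref{autocontthm}) and additive, hence $\RR$-linear, so for every $x \in A$ and $t \in \RR$ we have $\sigma(tq + Tx) = \sigma(tp + x)$. I would first apply the Riesz holomorphic functional calculus to $q$, permissible because $\sigma(q) = \{0,1\}$ is totally disconnected, to decompose $q = \tilde q + n$, where $\tilde q \in B$ is the spectral idempotent of $q$ corresponding to $\{1\}$, $\tilde q$ commutes with $q$, and $n$ is quasinilpotent; the proposition then becomes equivalent to $n = 0$. To prove $n = 0$, I would attempt to produce, for every $y \in B$, a subharmonic function of $\lambda \in \CC$ built from $r(\lambda q + y)$ that coincides along $\lambda \in \RR$ with $r(\lambda p + T^{-1}y)$, and then run a Liouville-type argument in the spirit of Lemma~\ref{autocontlem}. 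Combined with Zem\'anek's characterisation of the radical, this should force $n \in \rad(B) = \{0\}$.

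The main obstacle I foresee is that the target identity $q^2 = q$ is fundamentally \emph{multiplicative}, whereas every hypothesis on $T$ is additive and spectral. Bridging this gap requires propagating the real-parameter spectral identity for $T$ to all of $\CC$ via subharmonic extension, paralleling Aupetit's argument in the proof of Lemma~\ref{autocontlem}; without such a bridge one only obtains quasinilpotency of $q^2 - q$, which is strictly weaker than vanishing. I anticipate the subharmonic extension, together with its interplay with the Peirce decomposition of $A$ with respect to $p$, to be the technically demanding step of the proof.
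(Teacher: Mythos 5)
Your reduction is set up correctly (it suffices to show $q^2-q=0$ rather than merely $r(q^2-q)=0$, and the Riesz decomposition $q=\tilde q+n$ makes this equivalent to $n=0$), but the proposal stops exactly where the proof has to begin: you never give an argument that forces $n=0$. The sentence ``this should force $n\in\rad(B)=\{0\}$'' is the entire content of the proposition, and it is not clear the route you sketch can deliver it. Zem\'anek's criterion would require you to show $r(n+z)=0$ for every quasinilpotent $z\in B$, but $n$ is defined by a contour integral of the resolvent of $q$ and is not expressible through the data your hypotheses control: the spectral identities available to you only concern real-linear combinations $\sigma(tTp+Tx)=\sigma(tp+x)$, and a Liouville argument on $\lambda\mapsto r(\lambda q+y)$ produces conclusions about $\lambda q+y$, not about the nilpotent part $n$ or about $q^2-q$. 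You correctly diagnose the obstacle --- every hypothesis is additive/spectral while the target is multiplicative --- but the proposed bridge (subharmonic extension plus Peirce decomposition) is not constructed, so this is a genuine gap rather than a routine verification.

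The paper closes this gap by a different and much shorter route: it invokes the Ransford--White spectral characterisation of algebraic elements \cite[Corollary 1.3]{ransford2001}, which says that idempotency of $e$ is equivalent to a purely spectral, perturbative condition, namely $\sigma(e)\subseteq\{0,1\}$ together with a local Lipschitz estimate $\Delta(\sigma(e+x),\sigma(e))\leq C\|x\|$ for $\|x\|<\delta$ in the Hausdorff metric. Since $T$ is additive and spectrum preserving, $\Delta(\sigma(e+x),\sigma(e))=\Delta(\sigma(Te+Tx),\sigma(Te))$, and the two-sided bounds $\alpha\|x\|\leq\|Tx\|\leq\beta\|x\|$ (from the automatic continuity of $T$ and $T^{-1}$, Theorem~\ref{autocontthm}) convert the Lipschitz estimate in $\|x\|$ into one in $\|Tx\|$; surjectivity of $T$ then lets one apply the characterisation in $B$ to conclude $Te$ is idempotent. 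In other words, the ``bridge'' from spectral to multiplicative information that you were looking for already exists in the literature, and once it is in hand no subharmonicity or functional calculus is needed. If you want to salvage your approach, the missing ingredient is precisely such a spectral characterisation of idempotents; without it, quasinilpotency of $q^2-q$ is as far as your argument goes.
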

\begin{proof}
Let an idempotent $e$ in $A$ be given. By the characterization given in \cite[Corollary 1.3]{ransford2001},
there are real numbers $C > 0$ and $\delta > 0$ such that
$\Delta(\sigma(e+x), \sigma(e))\leq C\|x\|$ for all $x\in A$ with $\|x\|<\delta$, where $\Delta$ denotes the Hausdorff distance.
Since $T$ is additive and spectrum preserving, it follows that $\sigma(Te) \subseteq \{0, 1\}$ and
$$\Delta(\sigma(e+x), \sigma(e)) = \Delta(\sigma(Te+Tx), \sigma(Te)).$$
Thus, using the notation in~\eqref{eq:bdd-below},
$$\Delta(\sigma(Te+Tx), \sigma(Te))\leq\frac{C}{\alpha} \|Tx\|$$
for all $Tx \in B$ with $\|Tx\| <\alpha\delta$. Since $T$ is surjective, we can use \cite[Corollary 1.3]{ransford2001} once more to conclude that $Te$ is an idempotent in~$B$.
\end{proof}
An argument similar to the proof of Proposition~\ref{idempotentthm}, which uses \cite[Corollary 1.4]{ransford2001}, yields the following result;
see also \cite[Lemma~3.1]{MathieuSchick2003}.
\begin{prop}\label{sqzerothm}
Let $A$ be semisimple. Suppose that $T\colon A\to B$ is a surjective additive spectrum preserving map.
Then $T$ maps square-zero elements in $A$ to square-zero elements in~$B$.
\end{prop}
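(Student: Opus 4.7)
The plan is to follow the template of Proposition~\ref{idempotentthm} almost verbatim, substituting the Lipschitz-type spectral characterisation of idempotents in \cite[Corollary~1.3]{ransford2001} by its square-zero analogue \cite[Corollary~1.4]{ransford2001}. The latter characterises the square-zero elements of a semisimple Banach algebra by a Hausdorff-distance estimate of the form
$$\Delta\bigl(\sigma(e+x),\sigma(e)\bigr)\leq C\,\|x\|^{1/2}\qquad(\|x\|<\delta),$$
with H\"older exponent $1/2$ in place of the Lipschitz exponent $1$ appearing in the idempotent criterion, reflecting the fact that for $e^2=0$ one has $(e+x)^2=ex+xe+x^2$, so the spectrum of $e+x$ can only grow like $\|x\|^{1/2}$.

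Given a square-zero element $e\in A$, let $C,\delta>0$ be the constants supplied by the forward direction of this criterion. By Proposition~\ref{propbasic}(a) together with the additivity of $T$, I have $\sigma(Te)=\sigma(e)=\{0\}$ and
$$\Delta\bigl(\sigma(e+x),\sigma(e)\bigr)=\Delta\bigl(\sigma(Te+Tx),\sigma(Te)\bigr)\qquad(x\in A).$$
The bound \eqref{eq:bdd-below} lets me replace $\|x\|$ by $\alpha^{-1}\|Tx\|$, so for every $Tx\in B$ with $\|Tx\|<\alpha\delta$ I obtain
$$\Delta\bigl(\sigma(Te+Tx),\sigma(Te)\bigr)\leq\frac{C}{\sqrt{\alpha}}\,\|Tx\|^{1/2}.$$
Surjectivity of $T$ ensures that $Tx$ exhausts a full open ball around $0$ in $B$, so the converse direction of \cite[Corollary~1.4]{ransford2001} applies directly to $Te$ and forces $(Te)^2=0$.

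The only genuinely delicate point is confirming that Ransford's Corollary~1.4 does have exactly the quantitative Hausdorff form used above (with the correct H\"older exponent), so that the local spectral behaviour can be transported across $T$ by means of \eqref{eq:bdd-below}. Once that is in hand the argument is a carbon copy of Proposition~\ref{idempotentthm}, and no separate treatment of the degenerate case $e=0$ is required since $T(0)=0$ by Proposition~\ref{propbasic}(c).
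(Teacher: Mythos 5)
Your proposal is correct and is essentially the paper's own argument: the paper's proof consists precisely of the remark that the idempotent argument of Proposition~\ref{idempotentthm} goes through with \cite[Corollary 1.3]{ransford2001} replaced by its square-zero analogue \cite[Corollary 1.4]{ransford2001}. You have carried this out faithfully, including the correct H\"older exponent $1/2$, the observation that $\sigma(Te)=\sigma(e)=\{0\}$, and the transport of the estimate via \eqref{eq:bdd-below} and surjectivity.
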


\section{Additive Spectrum Preserving Mappings On Certain \textsl{C*}-Algebras}\label{sect:addspecCstar}
\noindent
In order to put the results in the previous section to good use, we need to make further assumptions on the domain algebra.
A unital \C* $A$ is of \textit{real rank zero\/} if the invertible self-adjoint elements are dense in the set $A_{sa}$
of self-adjoint elements of~$A$ \cite[V.3.2.7]{blackadar}.
This is equivalent to the property that every $a\in A_{sa}$ is the limit of finite (real-)linear combinations of orthogonal projections
in~$A$. This property together with the fact that spectrum preserving linear mappings (and in fact, even spectrally bounded linear mappings,
see, e.g., \cite{MathieuSchick2003}) send orthogonal projections to orthogonal idempotents has been the key to many results
leading to the conclusion of a Jordan isomorphism. A prominent example is Aupetit's paper~\cite{Aup00}.

An alternative, independent approach puts constraints on the codomain algebra, see, e.g.,~\cite{MathieuSchulz2023}.

The continuity of an additive spectrum preserving map together with the preservation of idempotents (Proposition~\ref{idempotentthm}) leads to the following result.

\begin{prop}\label{weakjordanprop}
Let $A$ be a \C* with real rank zero and suppose that $T\colon A\to B$ is a surjective additive spectrum preserving map.
Then $T(a^2) = (Ta)^2$ for every $a\in A_{sa}$.
\end{prop}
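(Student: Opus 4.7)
The plan is to combine three ingredients already available: the continuity of $T$ (so $T$ is $\RR$-linear), the fact that $T$ sends idempotents to idempotents (Proposition~\ref{idempotentthm}), and the density property characterising real rank zero.

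First I would note that since $T$ is continuous by Theorem~\ref{autocontthm} and additive, $T$ is $\RR$-linear. Because $A$ has real rank zero, every $a \in A_{sa}$ is a norm-limit of elements of the form $a_n = \sum_{i=1}^{k_n} \lambda_i^{(n)} p_i^{(n)}$ with $\lambda_i^{(n)} \in \RR$ and with $p_1^{(n)},\dots,p_{k_n}^{(n)}$ mutually orthogonal projections in $A$. Since multiplication is jointly continuous and $T$ is continuous, both $T(a^2)$ and $(Ta)^2$ depend continuously on $a$. Hence it is enough to prove the identity $T(a^2) = (Ta)^2$ for a fixed real-linear combination $a = \sum_{i=1}^k \lambda_i p_i$ of pairwise orthogonal projections.

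The key algebraic step is to show that for pairwise orthogonal projections $p, q \in A$ the images satisfy
\[
T(p)T(q) + T(q)T(p) = 0.
\]
To see this, observe that $p + q$ is itself a projection (because $pq = qp = 0$), so by Proposition~\ref{idempotentthm} the element $T(p+q) = T(p) + T(q)$ (using additivity) is an idempotent in $B$. Expanding $(T(p) + T(q))^2 = T(p) + T(q)$ and cancelling $(Tp)^2 = Tp$ and $(Tq)^2 = Tq$ (again by Proposition~\ref{idempotentthm}) yields the anticommutation relation displayed above.

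With this in hand the verification is a direct calculation. By orthogonality of the $p_i$, we have $a^2 = \sum_{i=1}^k \lambda_i^2\, p_i$, and by the $\RR$-linearity of $T$,
\[
T(a^2) = \sum_{i=1}^k \lambda_i^2\, T(p_i), \qquad Ta = \sum_{i=1}^k \lambda_i\, T(p_i).
\]
Squaring the second expression and splitting into diagonal and off-diagonal terms gives
\[
(Ta)^2 = \sum_{i=1}^k \lambda_i^2\, (Tp_i)^2 \;+\; \sum_{i<j} \lambda_i \lambda_j \bigl(T(p_i)T(p_j) + T(p_j)T(p_i)\bigr) = \sum_{i=1}^k \lambda_i^2\, T(p_i),
\]
where we have used $(Tp_i)^2 = Tp_i$ and the anticommutation relation from the previous step. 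Comparing with the expression for $T(a^2)$ finishes the proof. The only point that requires a genuine argument is the anticommutation of the images of orthogonal projections; once this is established the rest is bookkeeping.
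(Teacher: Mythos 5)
Your proposal is correct and follows essentially the same route as the paper: continuity (hence $\RR$-linearity) plus density of real-linear combinations of orthogonal projections, combined with idempotent preservation from Proposition~\ref{idempotentthm}. The anticommutation relation $T(p)T(q)+T(q)T(p)=0$ that you derive from the idempotency of $T(p+q)$ is exactly the step the paper dismisses as ``readily checked,'' so you have simply supplied the details the authors omit.
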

\begin{proof}
By Theorem~\ref{autocontthm}, $T$ is an $\RR$-linear continuous mapping. By Proposition~\ref{idempotentthm}, $T$ preserves idempotents and thus,
$T$ preserves orthogonality between idempotents. If $x$ is a real-linear combination of mutually orthogonal projections,
then $T(x^2) = (Tx)^2$, as is readily checked. Since $A$ has real rank zero and $T$ is continuous, this establishes that $T(a^2) = (Ta)^2$
for every self-adjoint element $a\in A$.
\end{proof}

The next result allows us to extend the restriction of $T$ on the self-adjoint part of $A$ to a Jordan homomorphism on all of~$A$.

\begin{prop}\label{genjordanprop}
Let $A$ be a \C* with real rank zero and suppose that $T\colon A\to B$ is a surjective additive spectrum preserving map.
Then there exists a unital Jordan homomorphism $S\colon A\to B$ which agrees with\/ $T$ on~$A_{sa}$.
\end{prop}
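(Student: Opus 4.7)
The plan is to construct $S$ by the natural extension of $T|_{A_{sa}}$ via the real-linear decomposition $A = A_{sa} \oplus i\,A_{sa}$, and then verify the Jordan identity using the polarization trick applied to Proposition~\ref{weakjordanprop}.

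First, I would establish a Jordan-product identity for $T$ on $A_{sa}$. For any $a,b \in A_{sa}$ we have $a+b \in A_{sa}$, so by Proposition~\ref{weakjordanprop}
\begin{equation*}
T((a+b)^2) = (T(a+b))^2 = (Ta+Tb)^2,
\end{equation*}
since Theorem~\ref{autocontthm} gives $\RR$-linearity of $T$ on $A_{sa}$. Expanding both sides and cancelling $T(a^2)=(Ta)^2$ and $T(b^2)=(Tb)^2$, additivity yields
\begin{equation*}
T(ab+ba) = Ta\,Tb + Tb\,Ta \qquad (a,b\in A_{sa}).
\end{equation*}

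Next, define $S\colon A\to B$ as follows. Each $a\in A$ has a unique decomposition $a=a_1+ia_2$ with $a_1,a_2\in A_{sa}$ (the real and imaginary parts). Set
\begin{equation*}
S(a) := T(a_1)+iT(a_2).
\end{equation*}
Since $T$ is $\RR$-linear on $A_{sa}$, a short calculation shows $S$ is $\CC$-linear on $A$; clearly $S$ restricted to $A_{sa}$ coincides with $T$, and $S(\mathbf{1})=T(\mathbf{1})=\mathbf{1}$ by Proposition~\ref{propbasic}(c).

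It remains to verify $S(a^2)=S(a)^2$ for every $a\in A$. Writing $a=a_1+ia_2$, the real and imaginary parts of $a^2$ are
\begin{equation*}
\Rel(a^2) = a_1^2-a_2^2, \qquad \text{Im}(a^2) = a_1a_2+a_2a_1 \in A_{sa}.
\end{equation*}
Applying $S$ and using $\RR$-linearity together with the Jordan identity established above for self-adjoint elements,
\begin{equation*}
S(a^2) = T(a_1)^2 - T(a_2)^2 + i\bigl(T(a_1)T(a_2) + T(a_2)T(a_1)\bigr) = \bigl(T(a_1)+iT(a_2)\bigr)^2 = S(a)^2,
\end{equation*}
as required.

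The only step that demands attention is showing that a polarization argument is permitted, i.e.\ that $a_1a_2+a_2a_1$ really is self-adjoint so that the value $T(a_1a_2+a_2a_1)$ is accessible from Proposition~\ref{weakjordanprop}; this is immediate. I do not expect a serious obstacle here: once Proposition~\ref{weakjordanprop} hands us the squaring identity on $A_{sa}$, the extension to a complex Jordan homomorphism is forced by the $\RR$-linear splitting of $A$ into its self-adjoint and skew-adjoint parts.
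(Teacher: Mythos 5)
Your proposal is correct and follows essentially the same route as the paper: define $S(a_1+ia_2)=T(a_1)+iT(a_2)$ using the decomposition $A=A_{sa}+iA_{sa}$, obtain $T(ab+ba)=Ta\,Tb+Tb\,Ta$ on $A_{sa}$ by polarizing Proposition~\ref{weakjordanprop}, and verify the Jordan identity via $a^2=(a_1^2-a_2^2)+i(a_1a_2+a_2a_1)$. The only difference is that the paper writes out the additivity and $\CC$-linearity checks for $S$ in full where you defer them to ``a short calculation,'' but these are indeed routine.
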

\begin{proof}
Since each $x\in A$ has a unique representation $x =a + ib$, where $a$ and $b$ are self-adjoint elements in $A$, the map $S\colon A\to B$ given by $Sx=Ta+iTb$ is well defined.

Let $x = a +ib$ and $y = h+ik$ with $a, b, h, k \in A_{sa}$. Then, since $a+h$ and $b+k$ are self-adjoint and $T$ is additive, we get
\begin{equation*}
\begin{split}
	S(x+y) & = S((a+h) + i(b+k))  =  T(a+h) + iT(b+k) \\
	 & =  Ta + iTb + Th +iTk  =  Sx + Sy.
\end{split}
\end{equation*}
Hence, $S$ is additive. Moreover, since any real scalar multiple of a self-adjoint element is self-adjoint, and since $T$ is linear over~$\mathbb{R}$,
for any $\alpha \in \mathbb{C}$ we have

\begin{eqnarray*}
	S(\alpha x) & = & S((\mathrm{Re}(\alpha)+i\mathrm{Im}(\alpha))x) \\
	& = & S(\mathrm{Re}(\alpha)a+i\mathrm{Re}(\alpha)b -\mathrm{Im}(\alpha)b + i\mathrm{Im}(\alpha)a) \\
	& = & S(\mathrm{Re}(\alpha)a+i\mathrm{Re}(\alpha)b) +S(-\mathrm{Im}(\alpha)b + i\mathrm{Im}(\alpha)a) \\
	& = & T(\mathrm{Re}(\alpha)a)+iT(\mathrm{Re}(\alpha)b) +T(-\mathrm{Im}(\alpha)b) + iT(\mathrm{Im}(\alpha)a) \\
	& = & \mathrm{Re}(\alpha)Ta+i\mathrm{Re}(\alpha)Tb -\mathrm{Im}(\alpha)Tb + i\mathrm{Im}(\alpha)Ta \\
	& = & (\mathrm{Re}(\alpha)+i\mathrm{Im}(\alpha))Ta + i(\mathrm{Re}(\alpha)+i\mathrm{Im}(\alpha))Tb \\
	& = & \alpha(Ta + iTb) \;\, = \;\, \alpha Sx.
\end{eqnarray*}
Hence, $S$ is (complex-)linear. It remains to show that $S(x^2) = (Sx)^2$ for all $x \in A$.
From Proposition~\ref{weakjordanprop}, if $a$ and $b$ are self-adjoint in $A$, then $T((a+b)^2) = (T(a+b))^2$ from which we infer that
$T(ab+ba) = TaTb + TbTa$. Let $x = a +ib$ for $a$ and $b$ in~$A_{sa}$.
Notice that $a^2-b^2$ and $ab+ba$ are both self-adjoint in $A$. Hence,
\begin{equation*}
\begin{split}
		S(x^2) & = S(a^2-b^2+i(ab+ba)) 	= T(a^2-b^2)+iT(ab+ba) \\
		& = (Ta)^2-(Tb)^2+i(TaTb+TbTa)  = (Ta+iTb)^2\\
		& = (Sx)^2.
\end{split}
\end{equation*}
This completes the proof.
\end{proof}
At this stage it is natural to ask whether the mapping $S$ is surjective or injective. The second question is easily settled.

Since $S$ is a unital Jordan homomorphism, $\sigma(Sx)\subseteq\sigma(x)$ for every $x\in A$.
However, as $S$ agrees with $T$ on $A_{sa}$ and $T$ is spectrum preserving, $\sigma(Sx)=\sigma(x)$ for every $x\in A_{sa}$.
Let $x=a+ib$ with $a,b\in A_{sa}$ and suppose that $Sx=0$. Then $\sigma(a)=\sigma(Sa)=\sigma(-S(ib))=-i\sigma(b)$
which is only possible if $a=b=0$; thus $S$ is injective.

As a direct argument for the surjectivity of $S$ eludes us (which, in fact, would immediately imply the linearity of~$T$),
we resort to a trick which is reminiscent of the methods in~\cite{CostRep} and~\cite{mrcun}.

For the remainder of this section, we shall assume that $A$ is a unital \C* with real rank zero which is the domain
of our spectrum preserving additive mapping $T$ onto the unital Banach algebra~$B$. Note that, by Proposition~\ref{propcent},
$B$ is semisimple.

Set $R=T^{-1}S$. By the above discussion, $R$ is a unital, $\RR$-linear, injective continuous mapping which preserves the spectrum
of self-adjoint elements. Our aim is to show that $R=\id$ so that $T=S$ is linear. (We already know that $Rx=x$ for all $x\in A_{sa}$.)
\begin{thrm}\label{genmapthm}
The mapping $\tilde{R}\colon A\to A$, defined by
$$\tilde{R}x = \textstyle{\frac{1}{2}}\left(Rx-iR(ix)\right) \mbox{ for each }x\in A,$$
is a unital Jordan homomorphism on~$A$.
\end{thrm}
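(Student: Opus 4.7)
My plan is to verify, in turn, the three defining properties of a unital Jordan homomorphism: $\CC$-linearity, unitality, and preservation of squares.

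The $\RR$-linearity of $\tilde R$ is inherited from $R$. For complex homogeneity, using $R(-y)=-Ry$, a direct computation gives
\begin{equation*}
\tilde R(ix) \;=\; \tfrac{1}{2}\bigl(R(ix)-iR(-x)\bigr) \;=\; \tfrac{1}{2}\bigl(R(ix)+iRx\bigr) \;=\; i\,\tilde R x.
\end{equation*}
For unitality, Proposition~\ref{propbasic}(c) yields $T(i\mathbf 1)=i\mathbf 1$ and hence $T^{-1}(i\mathbf 1)=i\mathbf 1$; since $S$ is $\CC$-linear and unital, $S(i\mathbf 1)=i\mathbf 1$, so $R(i\mathbf 1)=T^{-1}S(i\mathbf 1)=i\mathbf 1$, and therefore $\tilde R\mathbf 1=\tfrac12(\mathbf 1+\mathbf 1)=\mathbf 1$.

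For the Jordan identity $\tilde R(x^2)=(\tilde R x)^2$, I would first reduce to $x\in A_{sa}$ by a polarisation argument identical to the one already used in the proof of Proposition~\ref{genjordanprop}: writing $x=a+ib$, the $\CC$-linearity of $\tilde R$ together with the self-adjoint case and the square case on $a+b$ forces the identity on $x$. For $a\in A_{sa}$ we have $Ra=a$, so setting $q(a):=R(ia)-ia$ we may write $\tilde R a=a-\tfrac{i}{2}q(a)$; expanding $(\tilde R a)^2$ and simplifying, the required identity becomes
\begin{equation*}
q(a^2)\;=\;aq(a)+q(a)a-\tfrac{i}{2}\,q(a)^2.
\end{equation*}

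The main obstacle is establishing this last relation. My approach is to apply $T$ to both sides and invoke the fact that $S=TR$ is a Jordan homomorphism, together with the spectrum-preserving behaviour of $R$ on $iA_{sa}$ — which gives $\sigma(R(ia))=i\sigma(a)$ via Proposition~\ref{propbasic} applied to the spectrum-preserving $T^{-1}$ and the Jordan map $S$. The serious difficulty is that $T$ is only $\RR$-linear: applying $T$ to the left-hand side yields $S(ia^2)-T(ia^2)=i(Ta)^2-T(ia^2)$, but the factors of $i$ multiplying terms involving $q(a)$ on the right cannot be pulled out of $T$. The proof must therefore combine the Jordan structure of $S$ (giving the identity $S(ab+ba)=TaTb+TbTa$ for self-adjoint $a,b$) with the spectral constraints on $q(a)$ and, crucially, with the injectivity of $T$ (Proposition~\ref{propbasic}(b)), in order to bridge the gap between the $\CC$-linear action of $S$ and the $\RR$-linear action of $T$ and recover the identity in $A$ itself.
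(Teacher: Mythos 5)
Your verification of $\CC$-linearity and unitality is correct and matches the paper, and your polarisation reduction of the Jordan identity to the self-adjoint case is sound. The problem is that the heart of the theorem --- the identity $\tilde R(a^2)=(\tilde R a)^2$ for $a\in A_{sa}$, which you correctly rewrite as $q(a^2)=aq(a)+q(a)a-\tfrac{i}{2}q(a)^2$ with $q(a)=R(ia)-ia$ --- is not proved: you name it as ``the main obstacle'' and list ingredients that ``must'' be combined, but you do not combine them. As you yourself observe, applying $T$ to the identity founders on the fact that $T$ is only $\RR$-linear, and neither the injectivity of $T$ nor the relation $S(ab+ba)=TaTb+TbTa$ bridges that gap. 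Crucially, your plan never invokes the real rank zero hypothesis, which is what actually makes the theorem work.

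The paper's proof takes a different and complete route: it shows that $\tilde R$ preserves idempotents and then invokes the standard fact that a continuous, unital, linear, idempotent-preserving map on a \C* of real rank zero is a Jordan homomorphism. The idempotent step rests on the observation that for each nonzero $\mu\in\CC$ the map $x\mapsto\mu^{-1}R(\mu x)$ is again a composition of idempotent-preserving maps (it equals $T_\mu^{-1}S$, where $T_\mu y=\mu^{-1}T(\mu y)$ is itself a surjective additive spectrum preserving map). Applying this with $\mu=1$, $\mu=i$ and $\mu=1+i$ to an idempotent $p$ produces three idempotents $e=Rp$, $f=-iR(ip)$ and $\tfrac12\left((1-i)e+(1+i)f\right)$, and squaring the last forces $e+f=ef+fe$, whence $\tilde Rp=\tfrac12(e+f)$ is an idempotent. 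If you wanted to salvage your direct approach, you would in effect have to reprove this: the identity $(-iR(ip))^2=-iR(ip)$ on idempotents, extended by real rank zero and continuity to $(-iR(ix))^2=-iR(ix^2)$ on $A_{sa}$, is exactly what is needed to control $q(a)^2$ and the cross terms, and the paper only obtains it (in Lemma~\ref{lem3}) as a \emph{consequence} of the theorem you are trying to prove.
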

\begin{proof}
By Proposition~\ref{propbasic} (c), $R(i\mathbf{1})=i\mathbf{1}$ hence $\tilde R$ is unital.
Since $R$ is continuous, additive and $\RR$-linear, the same is true for~$\tilde{R}$.
Thus, to establish the linearity of $\tilde{R}$, it is enough to show that $\tilde{R}(ix) = i\tilde{R}(x)$ for each $x\in A$,
which is routine to check.

We now show that $\tilde{R}$ preserves idempotents. To this end, let $p\in A$ be an idempotent, and set $e\!:= Rp$ and $f\!:=-iR(ip)$.
By Proposition~\ref{idempotentthm}, $T$ and hence $T^{-1}$ preserve idempotents and so does $S$ as a Jordan homomorphism.
It follows that $e$ and $f$ are both idempotents. Moreover, using similar reasoning, it follows that the function
	$$x \mapsto \frac{(1-i)}{2}R((1+i)x) \qquad(x \in A)$$
	also preserves idempotents; hence, $\frac{(1-i)}{2}R((1+i)p)$ is an idempotent as well. Notice now that
	\begin{equation}
	\frac{(1-i)}{2}R((1+i)p) = \frac{1}{2}((1-i)e+(i+1)f),
	\label{eq4}
	\end{equation}
	and that
	\begin{eqnarray}
		\left(\frac{1}{2}((1-i)e+(i+1)f)\right)^2 & = & \frac{1}{4}\left((1-i)^2e + (i+1)^2f + 2ef + 2fe \right) \nonumber \\
		& = & \frac{1}{4}\left(-2ie + 2if + 2ef + 2fe\right) \nonumber \\
		& = & \frac{1}{2}\left(-ie + if + ef + fe\right). \label{eq5}
	\end{eqnarray}
	Since $\frac{(1-i)}{2}R((1+i)p)$ is an idempotent, it follows from \eqref{eq4} and \eqref{eq5} that
	$$e + f = ef + fe.$$
	Consequently, $e-f$ is a square zero element and for any $\lambda \in \mathbb{C}$, we have that
	\begin{eqnarray*}
		(\lambda e + (1-\lambda)f)^2 & = & \lambda^2 e + (1-\lambda)^2 f +\lambda(1-\lambda)(ef+fe) \\
		& = & \lambda^2(e+f) +(1-2\lambda)f + \lambda(ef+fe) - \lambda^2(ef+fe) \\
		& = & \lambda^2(e+f) +(1-2\lambda)f + \lambda(e+f) - \lambda^2(e+f) \\
		& = & \lambda e + (1-\lambda)f.
	\end{eqnarray*}
	Hence, $\lambda e + (1-\lambda)f$ is an idempotent for every $\lambda \in \mathbb{C}$. With $\lambda = \frac{1}{2}$, we see that
	$$\tilde{R}p = \frac{1}{2}\left(Rx-iR(ix)\right) = \frac{1}{2}(e+f)$$
	is an idempotent.

It is well known that any continuous, linear, and idempotent preserving map on a $C^{\ast}$-algebra with real rank zero is a Jordan homomorphism;
compare Proposition~\ref{weakjordanprop} and~\cite{MathieuSchick2003}.
\end{proof}
\begin{remark}\label{rem:all-idempotents}
Note that the argument in the above proof in fact shows that, for every $\lambda\in\CC$ and each idempotent $p\in A$,
\begin{equation}\label{eq1}
\lambda Rp + (1-\lambda)(-iR(ip))
\end{equation}
is an idempotent in~$A$.
\end{remark}

\begin{lem}\label{lem3}
For each $x\in A$, $\left(Rx + iR(ix)\right)^2 = 0$.
\end{lem}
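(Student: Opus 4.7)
The natural auxiliary object is the ``conjugate-linear companion'' of $\tilde R$, namely
\[
\hat R(x):= \tfrac{1}{2}\bigl(Rx + iR(ix)\bigr),\qquad x\in A,
\]
so that the statement of the lemma is exactly $\hat R(x)^2=0$ on~$A$. Since $R$ is additive, $\RR$-linear and continuous (by Theorem~\ref{autocontthm}), so is $\hat R$, and a short computation using $R(-x)=-Rx$ gives $\hat R(ix)=-i\hat R(x)$, so $\hat R$ is in fact conjugate-linear. My plan is to show that $\hat R(x)^2=0$ first on idempotents, then on sums of orthogonal projections, then on $A_{sa}$ by real-rank-zero approximation, and finally on all of~$A$ by splitting into real and imaginary parts.

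For an idempotent $p\in A$, let $e:=Rp$ and $f:=-iR(ip)$. In the proof of Theorem~\ref{genmapthm} it was established that $e,f$ are idempotents and that $e+f=ef+fe$. Since $2\hat R(p)=Rp+iR(ip)=e-f$, the relation $e+f=ef+fe$ yields $(e-f)^2=e-(ef+fe)+f=0$, so $\hat R(p)^2=0$. Now if $p,q$ are orthogonal projections in~$A$, then $p+q$ is again a projection, and so $\hat R(p+q)^2=0$; expanding $(\hat R(p)+\hat R(q))^2$ and using the two vanishing squares gives the anticommutation relation
\[
\hat R(p)\hat R(q)+\hat R(q)\hat R(p)=0.
\]
Consequently, for any real-linear combination $a=\sum_{k=1}^n\alpha_k p_k$ of mutually orthogonal projections, the $\RR$-linearity of $\hat R$ and bilinear expansion give $\hat R(a)^2=0$. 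Because $A$ has real rank zero, such combinations are dense in~$A_{sa}$, and continuity of both $\hat R$ and the squaring map extends this to $\hat R(a)^2=0$ for every $a\in A_{sa}$.

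For the final step, let $a,b\in A_{sa}$; applying the $A_{sa}$ case to $a+b$ and expanding gives $\hat R(a)\hat R(b)+\hat R(b)\hat R(a)=0$. Any $x\in A$ decomposes uniquely as $x=a+ib$ with $a,b\in A_{sa}$, and conjugate linearity yields $\hat R(x)=\hat R(a)-i\hat R(b)$; then
\[
\hat R(x)^2=\hat R(a)^2-i\bigl(\hat R(a)\hat R(b)+\hat R(b)\hat R(a)\bigr)-\hat R(b)^2=0,
\]
which is the claim. The only real obstacle is the first step, i.e., spotting that the computation of $(e-f)^2$ already performed inside the proof of Theorem~\ref{genmapthm} gives exactly $\hat R(p)^2=0$; after that, the passage through orthogonal projection sums, real-rank-zero density, and the $a+b$ trick is the same template used to prove Proposition~\ref{weakjordanprop}.
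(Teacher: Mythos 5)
Your proof is correct, and its first half takes a genuinely different route from the paper's. The paper proves the identity on $A_{sa}$ by assembling three multiplicative facts and cancelling cross terms: $(Rx)^2=R(x^2)$ (from $Rx=x$ on $A_{sa}$), the identity $(-iR(ix))^2=-iR(ix^2)$ (obtained by rerunning the real-rank-zero argument for the map $y\mapsto -iR(iy)$, which preserves orthogonal idempotents), and the full Jordan homomorphism property of $\tilde R$ from Theorem~\ref{genmapthm}. You instead notice that the computation already carried out inside the proof of Theorem~\ref{genmapthm} gives $e+f=ef+fe$, hence $(e-f)^2=0$, which is exactly $\hat R(p)^2=0$ for an idempotent $p$ (the paper even records there that $e-f$ is a square-zero element); you then propagate this by anticommutation over orthogonal projections, bilinear expansion, real-rank-zero density and continuity. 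This is somewhat more economical: it uses only an intermediate identity from the proof of Theorem~\ref{genmapthm} rather than its conclusion, and it never invokes $Rx=x$ on $A_{sa}$; the price is that you must observe that $\hat R$ is conjugate-linear (your computation $\hat R(ix)=-i\hat R(x)$ is correct) so that the scalars come out right in the expansion of $\hat R\bigl(\sum_k\alpha_kp_k\bigr)^2$ and in the final step. The passage from $A_{sa}$ to all of $A$ --- polarizing $\hat R(a+b)^2=0$ to get the anticommutator and then expanding $\hat R(a+ib)^2$ --- is the same as the paper's. All steps check out.
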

\begin{proof}
Firstly, let $x\in A_{sa}$. Then $(Rx)^2=x^2=R(x^2)$.
By Remark~\ref{rem:all-idempotents}, we have that
	\begin{equation}\label{eq2}
	(-iR(ip))^2 = -iR(ip)
	\end{equation}
for each idempotent $p\in A$.
Since $R$ is additive, this implies that the mapping $y\mapsto -iR(iy)$ sends orthogonal idempotents to orthogonal idempotents.
Since $A$ has real rank zero and $R$ is continuous and $\RR$-linear, we conclude that
$$-R(ix)^2 = (-iR(ix))^2 = -iR(ix^2).$$
By Theorem~\ref{genmapthm}, $\tilde{R}$ is a Jordan homomorphism. Consequently,
	\begin{eqnarray*}
		\frac{1}{2}\left(R(x^2)-iR(ix^2)\right) & = & \left(\frac{1}{2}\left(Rx-iR(ix)\right) \right)^2 \\
		& = & \frac{1}{4}\left((Rx)^2-(R(ix))^2 - i\left(RxR(ix) + R(ix)Rx\right)\right) \\
		& = & \frac{1}{4}\left(R(x^2)-iR(ix^2) - i\left(RxR(ix) + R(ix)Rx\right)\right),
	\end{eqnarray*}
	and so,
	$$R(x^2)-iR(ix^2) = -i\left(RxR(ix) + R(ix)Rx\right).$$
It follows that
	$$\left(Rx + iR(ix)\right)^2  =  (Rx)^2-(R(ix))^2 + i\left(RxR(ix) + R(ix)Rx\right) = 0$$
	for all $x \in A_{sa}$.

Now let $x, y \in A_{sa}$ be arbitrary. Since
	$$\left(Rx + iR(ix)\right)^2 = \left(Ry + iR(iy)\right)^2  =\left(R(x+y) + iR(i(x+y))\right)^2 = 0,$$
	the additivity of $R$ yields that
	$$\left(Rx + iR(ix)\right)\left(Ry + iR(iy)\right) + \left(Ry + iR(iy)\right)\left(Rx + iR(ix)\right) = 0.$$
	Hence,
	\begin{eqnarray*}
		& & \left(R(x+iy) + iR(i(x+iy))\right)^2 \\ & = & \left((Rx+iR(ix))-i(Ry+iR(iy))\right)^2 \\
		& = & (Rx+iR(ix))^2 - (Ry+iR(iy))^2 \\ & &  {}\quad-i \left(\left(Rx + iR(ix)\right)\left(Ry + iR(iy)\right) + \left(Ry + iR(iy)\right)\left(Rx + iR(ix)\right)\right) \\
		& = & 0.\qedhere
	\end{eqnarray*}
\end{proof}

The bounded linear mapping $W\colon A\to A$, defined by
$$Wx = \frac{1}{2}\left(R(x^{\ast}) +iR(ix^{\ast})\right)\;\;\;(x \in A),$$
maps into the set of square zero elements, by Lemma~\ref{lem3}. Our next result highlights the interaction between $\tilde{R}$ and~$W$.
\begin{lem}\label{lem4}
For every $\alpha\in\CC$, $\tilde{R} +\alpha W$ is a (continuous) Jordan homomorphism from $A$ into itself.
Moreover, $\tilde{R}+W =\id$. Thus, for all $\alpha$ sufficiently close to~$1$, it follows that $\tilde{R} +\alpha W$ is a Jordan automorphism.
\end{lem}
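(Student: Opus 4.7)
My plan is to reduce all three assertions to the single identity $\tilde{R} + W = \id$ together with the square-zero property $(Wx)^2 = 0$ for every $x \in A$, the latter being simply Lemma~\ref{lem3} applied to $x^{\ast}$ (since $2Wx = Rx^{\ast} + iR(ix^{\ast})$).

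To establish $\tilde{R}x + Wx = x$ for each $x \in A$, I would decompose $x = a + ib$ with $a, b \in A_{sa}$ and expand both $\tilde{R}x$ and $Wx$ using the $\RR$-linearity of~$R$. The terms of the form $R(\pm ib)$ cancel in pairs while the remaining contributions collapse to $Ra + iRb$, which equals $a + ib = x$ because $R$ fixes $A_{sa}$ pointwise. Once this identity is in hand, $W = \id - \tilde{R}$ automatically inherits continuity and complex-linearity from the corresponding properties of $\tilde{R}$ proved in Theorem~\ref{genmapthm}; in particular, each $\tilde{R} + \alpha W$ is a bounded complex-linear operator on~$A$.

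For the Jordan property of $\tilde{R} + \alpha W$, I exploit that both $\id$ and $\tilde{R}$ are Jordan homomorphisms. Applying $\id = \tilde{R} + W$ to $x^2$ yields
\[
W(x^2) = x^2 - (\tilde{R}x)^2 = (\tilde{R}x + Wx)^2 - (\tilde{R}x)^2 = (\tilde{R}x)(Wx) + (Wx)(\tilde{R}x),
\]
where the final equality uses $(Wx)^2 = 0$. Expanding the square $((\tilde{R} + \alpha W)x)^2$ and once more invoking $(Wx)^2 = 0$ then gives
\[
((\tilde{R} + \alpha W)x)^2 = (\tilde{R}x)^2 + \alpha\bigl((\tilde{R}x)(Wx) + (Wx)(\tilde{R}x)\bigr) = \tilde{R}(x^2) + \alpha W(x^2),
\]
so that $\tilde{R} + \alpha W$ is a Jordan homomorphism for every $\alpha \in \CC$.

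Finally, I rewrite $\tilde{R} + \alpha W = \id + (\alpha - 1)W$. Since $W$ is bounded, the Neumann series shows that this operator is bijective with bounded inverse whenever $|\alpha - 1|$ is sufficiently small; a bijective Jordan homomorphism of $A$ onto itself is a Jordan automorphism, completing the argument. No serious obstacle is anticipated: the genuine technical input has already been packaged into Lemma~\ref{lem3}, after which everything reduces to formal manipulations and a standard perturbation.
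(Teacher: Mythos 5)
Your proposal is correct, and for the central claim it takes a genuinely different route from the paper. For the identity $\tilde{R}+W=\id$ and the final perturbation step (Neumann series versus openness of the group of invertible operators) you and the paper do essentially the same thing. The difference is in proving that $\tilde{R}+\alpha W$ is a Jordan homomorphism: the paper goes back to Remark~\ref{rem:all-idempotents}, observing that for a projection $p$ one has $\tilde{R}p+\alpha Wp=\lambda Rp+(1-\lambda)(-iR(ip))$ with $\lambda=(1+\alpha)/2$, hence an idempotent, and then invokes once more the folklore result that a continuous linear projection-preserving map on a real rank zero \C* is a Jordan homomorphism. You instead give a purely algebraic verification: from $\id=\tilde{R}+W$, the Jordan property of $\tilde{R}$ (Theorem~\ref{genmapthm}), and $(Wx)^2=0$ (Lemma~\ref{lem3} applied to $x^{\ast}$, as you correctly note), you derive $W(x^2)=(\tilde{R}x)(Wx)+(Wx)(\tilde{R}x)$ and then expand $((\tilde{R}+\alpha W)x)^2$ directly; the $\alpha^2$ term vanishes and the identity $(\tilde{R}+\alpha W)(x^2)=((\tilde{R}+\alpha W)x)^2$ drops out for every $\alpha\in\CC$. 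This is cleaner in that it needs no further appeal to the real rank zero structure or to idempotent density at this stage (those are already baked into Theorem~\ref{genmapthm} and Lemma~\ref{lem3}), and it makes transparent why the family $\tilde{R}+\alpha W$ is Jordan for \emph{all} $\alpha$, not just $|\lambda|=1$ values; the paper's route has the mild advantage of reusing machinery already set up in Remark~\ref{rem:all-idempotents}. Both arguments are sound.
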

\begin{proof}
To see that $\tilde{R}+W =\id$, simply note that, for every $x\in A$, we have
	\begin{eqnarray*}
		\tilde{R}x + Wx & = & \frac{1}{2}\left(Rx -iR(ix)\right) + \frac{1}{2}\left(R(x^{\ast}) +iR(ix^{\ast})\right) \\
		& = & R\left(\frac{x+x^{\ast}}{2} \right) + iR\left(\frac{x-x^{\ast}}{2i} \right) \\
		& = & \frac{x+x^{\ast}}{2} + i\frac{x-x^{\ast}}{2i} \\
		& = & x.
	\end{eqnarray*}
Fix $\alpha\in\CC$. In order to prove that $\tilde{R} +\alpha W$ is a Jordan homomorphism,
it suffices to show that $\tilde{R} +\alpha W$ maps projections to idempotents. To this end, notice that for any projection $p = p^*\in A$, we have
	\begin{eqnarray*}
		\tilde{R}p + \alpha Wp & = & \frac{1}{2}\left(Rp -iR(ip)\right) + \frac{\alpha}{2}\left(Rp +iR(ip)\right) \\
		& = & \left(\frac{1}{2}+\frac{\alpha}{2}\right) Rp + \left(\frac{1}{2}-\frac{\alpha}{2}\right)\left(-iR(ip)\right) \\
		& = & \lambda Rp +(1-\lambda)\left(-iR(ip)\right),
	\end{eqnarray*}
with $\lambda = (1+\alpha)/2$, and this is an idempotent by Remark~\ref{rem:all-idempotents}.

Since the group of invertible bounded linear mappings on $A$ is open, it follows that $\tilde{R} +\alpha W$ is bijective for $\alpha$ in a neighbourhood
of~$1$.
\end{proof}
\begin{remark}\label{rem:decompose}
The strategy to decompose the identity on~$A$ into $\tilde R$ and $W$ is motivated by the methods in~\cite{KowalSlod} and~\cite{CostRep}.
In both papers, the original additive surjective spectrum preserving mapping $T\colon A\to B$ gives rise to the mapping $h(t)=e^{it}T(e^{-it}x)$,
from $\RR$ into~$B$, for fixed~$x$, which is then decomposed as follows
\begin{equation}\label{eq:decompose}
h(t)=\textstyle{\frac12}(Tx-iT(ix))+\textstyle{\frac12}e^{2it}(Tx+iT(ix)),
\end{equation}
since $T$ is $\RR$-linear. In~\cite{KowalSlod}, a secondary decomposition (which is only available in the case $B=\RR$) together with the spectrum preserving
property leads to the conclusion that $Tx=-iT(ix)$, for every~$x$, so that $T$ is linear \cite[Lemma~2.1]{KowalSlod}.

In~\cite{CostRep}, $A=B=M_n(\CC)$, therefore injectivity already implies bijectivity and the well-known characterisations of linear spectrum preserving
maps guide the authors through the various steps from~\eqref{eq:decompose} to the Jordan automorphism $Kx=\frac12(x+\tilde Rx)$ (in our notation),
which is then shown to be the identity (\cite[Lemma~2.1 and Lemma~1.3]{CostRep}), that is, $\tilde Rx=x$ for all $x\in A$.

In both approaches the conclusion from
\[
\sigma(x)=\sigma(Tx)=\sigma\bigl(\textstyle{\frac12}(Tx-iT(ix))+\textstyle{\frac{\lambda}{2}}(Tx+iT(ix))\bigr)
\]
for all $\lambda\in\CC$ with $|\lambda|=1$ to the same identity for \textit{all\/} $\lambda\in\CC$ is decisive.
Using \cite[Chapter~3, Exercise~19]{aupetit1991primer}, one can show that this identity remains valid for all $\lambda$ with $|\lambda|\leq1$ and all $x\in A_{sa}$.
\end{remark}
Although we do not need the next result in what follows,
we record it here as another instance of how similar additive mappings can behave in comparison with linear mappings.
It is inspired by the work in \cite{MathieuSourour2004} on spectral isometries;
the proof uses an idea from the argument which ultimately establishes \cite[Corollary 8]{MathieuSourour2004}.
\begin{prop}\label{weakcentralprojprop}
Let $A$ be a unital \C* with real rank zero and suppose that $T\colon A\to B$ is a surjective additive spectrum preserving map.
If $e$ is a central projection in~$A$, then $T(ex) = TeTx$ for all $x\in A$.
\end{prop}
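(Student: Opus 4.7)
The plan is to reduce the identity $T(ex) = Te \cdot Tx$ to a short chain of progressively finer statements, exploiting throughout that $T$ is $\RR$-linear and continuous (Theorem~\ref{autocontthm}) and that $Te$ is a central idempotent in~$B$ (Proposition~\ref{idempotentthm} combined with Corollary~\ref{corcent}). Polarising the identity $T(a^2) = (Ta)^2$ from Proposition~\ref{weakjordanprop} produces the Jordan-type relation $T(ac + ca) = Ta\,Tc + Tc\,Ta$ for all $a, c \in A_{sa}$; applied to $a = e$, the centrality of $e$ in~$A$ and of~$Te$ in~$B$ collapses both sides to give the self-adjoint case
\[
T(ec) = Te \cdot Tc \qquad (c \in A_{sa}).
\]
Writing a general $x \in A$ as $x = a + ib$ with $a, b \in A_{sa}$ and using $\RR$-linearity, the full claim then reduces to establishing $T(ieb) = Te \cdot T(ib)$ for every $b \in A_{sa}$.

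This latter identity is in turn a consequence of the single scalar statement $T(ie) = i\,Te$. Indeed, the auxiliary map $T'(x) := -iT(ix)$ is itself a surjective additive spectrum preserving map, so the self-adjoint case above, applied to $T'$, yields $T'(eb) = T'(e)\,T'(b)$ for every $b \in A_{sa}$. Substituting $T'(b) = -iT(ib)$ and, assuming the conjectured scalar identity, $T'(e) = -iT(ie) = Te$, this unpacks precisely to the required statement.

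The heart of the argument, and the step I expect to be the main obstacle, is the identity $T(ie) = iTe$. The approach is to exploit the family of rescaled maps $T_\alpha(x) := \alpha^{-1}T(\alpha x)$ for $\alpha \in \CC \setminus \{0\}$. Each $T_\alpha$ inherits surjectivity, additivity, and spectrum preservation from~$T$, so by Proposition~\ref{idempotentthm} it sends the projection~$e$ to an idempotent in~$B$. Setting $f := Te$ and $w := T(ie)$, the $\RR$-linearity of $T$ gives $T(\alpha e) = \mathrm{Re}(\alpha)\,f + \mathrm{Im}(\alpha)\,w$, and specialising $\alpha = e^{i\theta}$ translates the idempotency of $T_\alpha(e)$ into the one-parameter identity
\[
g(\theta)^2 = e^{i\theta}\,g(\theta) \qquad (\theta \in \RR),
\]
where $g(\theta) := \cos\theta \cdot f + \sin\theta \cdot w$. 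Evaluation at $\theta = \pi/2$ produces $w^2 = iw$, so $p := -iw$ is an idempotent; evaluation at $\theta = \pi/4$, together with the centrality of $f$ (which gives $fw = wf$) and the relation just obtained, leads to the affine identity $2fp = p + f$. Multiplying this relation in turn by the orthogonal central idempotents $f$ and $1 - f$ forces $fp = f$ and $(1-f)p = 0$, whence $p = f$; this is exactly $T(ie) = iTe$. Combining it with the two earlier reductions yields the proposition.
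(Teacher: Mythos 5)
Your proof is correct, and its overall architecture matches the paper's: the self-adjoint case $T(ec)=Te\,Tc$ comes from polarising Proposition~\ref{weakjordanprop} and using the centrality of $e$ and of $Te$, and the imaginary part is handled by applying that case to the auxiliary surjective additive spectrum preserver $x\mapsto -iT(ix)$ together with the identity $T(ie)=iTe$. Where you genuinely diverge is in the proof of that last identity, which you (rightly) single out as the crux and then establish from scratch via the rescaled maps $T_\alpha(x)=\alpha^{-1}T(\alpha x)$, the one-parameter family of idempotents $e^{-i\theta}(\cos\theta\,f+\sin\theta\,w)$, and the relations $w^2=iw$ and $2fp=p+f$. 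That argument checks out line by line, but it is redundant in context: Corollary~\ref{corcent}, which you already invoke to see that $Te$ is central, says that $T$ restricts to an \emph{algebra isomorphism} of $Z(A)$ onto $Z(B)$; since $e\in Z(A)$, this restriction is in particular $\CC$-linear, so $T(\lambda e)=\lambda Te$ for all $\lambda\in\CC$ — this is precisely what the paper means when it says $T$ is ``homogeneous at $e$'', and it yields $T(ie)=iTe$ in one line. (Your $T_\alpha$-argument does have the virtue of being self-contained and of working for idempotents whose centrality is only known on the codomain side, but here it buys nothing extra.) The endings also differ slightly: the paper shows $(\mathbf{1}-Te)T(ex)=0$ and $TeT((\mathbf{1}-e)x)=0$ and combines them, whereas you conclude directly from $T(ieb)=TeT(ib)$; both are valid, and yours is the more streamlined of the two.
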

\begin{proof}
Fix a central projection $e\in A$ and recall from Corollary~\ref{corcent} that the restriction of $T$ to $Z(A)$ is an algebra isomorphism onto~$Z(B)$.
Therefore, $Te$ is a central idempotent in $B$ and $T$ is homogeneous at~$e$. Let $y\in A_{sa}$.
By Proposition~\ref{weakjordanprop}, it follows that $T(y^2) = (Ty)^2$ and
	\begin{equation*}
		T((e+y)^2) = (T(e+y))^2.
	\end{equation*}
Hence, since $e$ and $Te$ are central, $T(ey) = TeTy$ for each self-adjoint $y\in A$.
Let $x = a + ib$, where both $a$ and $b$ are self-adjoint.
Since the map $x \mapsto -iT(ix)$ is a surjective additive spectrum preserving map from $A$ onto~$B$,
	\begin{eqnarray*}
		-i(\mathbf{1}-Te)T(ex) & = & -i(\mathbf{1}-Te)T(e(a+ib)) \\
		& = & -i(\mathbf{1}-Te)T(ea) -i(\mathbf{1}-Te)T(ieb) \\
		& = & -i(\mathbf{1}-Te)TeTa +(\mathbf{1}-Te)(-iT(ie))(-iT(ib)) \\
		& = & 0 +(\mathbf{1}-Te)Te(-iT(ib)) \\
		& = & 0,
	\end{eqnarray*}
and so, $(\mathbf{1}-Te)T(ex) = 0$.
Similarly, $TeT((\mathbf{1}-e)x) = 0$. From this we now see that $T(ex) = TeT(ex) = TeTx$ as claimed.
\end{proof}

\section{The von Neumann Algebra Case}\label{sect:von-neumann}
\noindent
We are now in a position to prove our main result.

\begin{thrm}\label{youbeauty}
Suppose that $T\colon A\to B$ is a surjective additive spectrum preserving mapping from a von Neumann algebra $A$ onto a unital Banach algebra~$B$.
Then $T$ is a Jordan isomorphism.
\end{thrm}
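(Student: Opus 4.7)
The plan is to use the structural machinery of Section~\ref{sect:addspecCstar} to reduce matters to showing that the auxiliary map $R = T^{-1}S$ is the identity, which will make $T = S$ the desired Jordan isomorphism. Recall that Theorem~\ref{genmapthm} and Lemma~\ref{lem4} have already produced the decomposition $\id = \tilde R + W$ on the von Neumann algebra $A$, where $\tilde R$ is a unital Jordan endomorphism of $A$, $W$ is a continuous $\CC$-linear map with $(Wx)^2 = 0$ for every $x \in A$, and $\id + (\alpha - 1)W$ is a Jordan automorphism of $A$ for $\alpha$ in a neighbourhood of $1$. It suffices to prove $W = 0$: this forces $\tilde R = \id$, renders $R$ complex-linear (since $W = 0$ is equivalent to $R(iy) = iR(y)$), and hence yields $R = \id$ and $T = S$.

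The first step is to identify $W$ as a Jordan derivation: expanding $\tilde R(x^2) = (\tilde R x)^2$ with $\tilde R = \id - W$ and exploiting $(Wx)^2 = 0$ yields $W(x^2) = xWx + Wx\cdot x$. Since $A$ is a C*-algebra (hence semiprime), a classical theorem promotes this Jordan derivation to a derivation, and Sakai's theorem then makes it inner: $Wx = [a, x]$ for some $a \in A$. The equation $[a, a^*]^2 = 0$ combined with the self-adjointness of $[a, a^*]$ in the C*-algebra $A$ forces $[a, a^*] = 0$, so $a$ is normal.

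The main obstacle will be to show that $a \in Z(A)$. My strategy is to pass to the direct integral decomposition of $A$ and argue by contradiction in each factor: if the fibre of $a$ is not a scalar, pick distinct $\lambda_1, \lambda_2 \in \sigma(a)$ with small disjoint closed neighbourhoods $N_1, N_2$, let $e_i := E(N_i)$ be the associated non-zero spectral projections, and use comparability of projections in a factor to obtain a non-zero partial isometry $u \in e_1 A e_2$, with projections $p_1 := uu^*$ and $p_2 := u^*u$ both non-zero. Writing $ae_i = \lambda_i e_i + \varepsilon_i$ with $\|\varepsilon_i\| \leq \mathrm{diam}(N_i)$, a direct computation of $[a, u+u^*]^2$ produces the leading term $-(\lambda_1-\lambda_2)^2(p_1+p_2)$ together with perturbations vanishing as the $N_i$ shrink to $\{\lambda_i\}$; in particular the result has norm bounded below by $(\lambda_1-\lambda_2)^2/2$ for $N_i$ sufficiently small, contradicting $[a, x]^2 = 0$. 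Hence $a$ must be scalar in every factor, giving $a \in Z(A)$ and $W = 0$.

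Finally, $W = 0$ makes $R$ complex-linear, and combined with $\tilde R = \id$ it yields $R = \id$, so $T = S$. Since $S$ is a unital injective Jordan homomorphism and $T$ is surjective, $T = S$ is a Jordan isomorphism from $A$ onto $B$.
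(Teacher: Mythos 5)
Your reduction to showing $W=0$ matches the paper's, but your route to it is genuinely different and, in its first half, correct and arguably more economical. The paper takes $H=\tilde R+\alpha W$ for a fixed $\alpha\neq1$ close to $1$, invokes Sinclair's theorem to write this Jordan automorphism as $e^{D}$ for a derivation $D$, makes $D$ inner, and then applies Le Page's theorem to the resulting inner automorphism $x\mapsto uxu^{-1}$. You instead expand $\tilde R(x^2)=(\tilde Rx)^2$ with $\tilde R=\id-W$ and $(Wx)^2=0$ to exhibit $W$ as a Jordan derivation, promote it to a derivation (Cusack/Bre\v{s}ar on $2$-torsion-free semiprime rings), and make it inner by Sakai's theorem, so $Wx=[a,x]$. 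All of this is sound and bypasses Sinclair entirely.

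The gap is in your final step. Direct integral decomposition into factors is available only for von Neumann algebras with separable predual, whereas the theorem is stated for arbitrary von Neumann algebras, so ``pass to the direct integral and argue in each fibre'' is not a legitimate move here. Even in the separable case you would need to justify the quantifier exchange: $[a,x]^2=0$ holds off a null set depending on $x$, and upgrading this to ``$[a(\omega),y]^2=0$ for all $y$ in the fibre, for almost every $\omega$'' requires a countable strongly dense family argument you do not supply. (The factor computation itself, with the partial isometry and the leading term $-(\lambda_1-\lambda_2)^2(p_1+p_2)$, is fine.) The repair is immediate and is precisely the tool the paper uses at the corresponding point: since $[a,x]^2=0$ gives $r(ax-xa)=0$ for every $x\in A$, Le Page's theorem (\cite[Theorem 5.2.1]{aupetit1991primer}) together with the semisimplicity of $A$ forces $a\in Z(A)$, hence $W=0$. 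This makes the normality of $a$, the spectral projections, and the entire reduction to factors unnecessary.
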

\begin{proof}
It suffices to show that $T = S$, cf.\ Proposition~\ref{genjordanprop}.
By Lemma~\ref{lem4}, $\tilde{R} + \alpha W$ is a Jordan automorphism on $A$ for all $\alpha$ sufficiently close to~$1$.
Fix such $\alpha\neq1$, and note that $H= \tilde{R} + \alpha W$ is in the principal component of the group of Jordan automorphisms on~$A$.
By Sinclair's result and its proof (see \cite[Lemma 1 and Theorem 2]{Sinclair1970}), $H = e^D$, where $D\colon A\to A$ is a derivation.
Since every derivation on a von Neumann algebra is inner (e.g., \cite[Corollary 8.6.6]{Ped1979}),
it follows that there is some $y \in A$ such that $Dx = yx-xy = [y, x]$ for all $x\in A$.
Consequently, $Hx = (e^D)(x) = e^{y}xe^{-y}$, that is, $H$ is an inner automorphism. Set $u=e^y$.
Since $\id - H = (1-\alpha)W$, it follows from Lemma~\ref{lem3} that $\id - H$ maps into the square zero elements in~$A$ and therefore,
$r(x - uxu^{-1}) = 0$ for all $x\in A$.
Replacing $x$ by $xu$, the aforementioned identity becomes $r(xu - ux) = 0$ for all $x \in A$.
Using Le Page's result \cite[Theorem 5.2.1]{aupetit1991primer}, we now infer that $u$ must be central in~$A$.
Consequently, $H =\id$, and so, $W = 0$ (since $\alpha\neq1$) and therefore $iR(x) = R(ix)$ for every $x\in A$.
Hence, $R$ is linear. Since $Rx = x$ for all $x \in A_{sa}$, $R = \id$ and $S = T$ which was to prove.
\end{proof}

\begin{cor}\label{cor:gen-case}
Every surjective additive spectrum preserving mapping from a \C* with real rank zero onto a unital Banach algebra is a Jordan isomorphism.
\end{cor}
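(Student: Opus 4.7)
The plan is to mimic the proof of Theorem~\ref{youbeauty} virtually verbatim. The only place where the von Neumann algebra hypothesis on $A$ is genuinely used is when the Kadison--Sakai theorem is invoked to conclude that the derivation $D$ produced by Sinclair's result is inner. For a general \C* of real rank zero this is not automatic, but $A^{**}$ is a von Neumann algebra, so the trick will be to push the relevant identities to $A^{**}$ and argue there.

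Concretely, I would first repeat the construction of Section~\ref{sect:addspecCstar} to obtain the continuous Jordan automorphism $H=\tilde R+\alpha W$ of~$A$, for some $\alpha\neq 1$ sufficiently close to~$1$, satisfying $(x-H(x))^2=0$ for every $x\in A$ (since $\id-H=(1-\alpha)W$ and $W$ lands in the square-zero elements, by Lemma~\ref{lem3}). Sinclair's theorem then yields $H=e^D$ for some continuous derivation $D\colon A\to A$, which extends uniquely to a normal (hence bounded) derivation $\bar D$ on $A^{**}$. By Kadison--Sakai there is $y\in A^{**}$ with $\bar D=[y,\,\cdot\,]$; setting $u=e^y\in A^{**}$, the automorphism $\bar H\!:=e^{\bar D}$ of $A^{**}$ agrees with $H$ on $A$ and is implemented by conjugation with $u$, i.e.\ $\bar H(x)=uxu^{-1}$.

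The key step is to promote the identity $(x-uxu^{-1})^2=0$, known only for $x\in A$, to all of $A^{**}$. By the Kaplansky density theorem, any $x\in A^{**}$ is the strong operator limit of a bounded net in $A$; since multiplication is jointly SOT-continuous on bounded sets and $u,u^{-1}\in A^{**}$ are bounded, the map $x\mapsto(x-uxu^{-1})^2$ is SOT-continuous on bounded sets, so the identity passes to all of $A^{**}$. Substituting $xu$ in place of $x$ yields $(xu-ux)^2=0$, whence $r([u,x])=0$, for every $x\in A^{**}$. Since $A^{**}$ is semisimple, Le Page's theorem~\cite[Theorem~5.2.1]{aupetit1991primer} forces $u\in Z(A^{**})$, so $H=\id$ on~$A$. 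The remainder of the proof of Theorem~\ref{youbeauty} now carries over unchanged: $W=0$ (because $\alpha\neq 1$), so $R$ is $\CC$-linear, and as $R=\id$ on $A_{sa}$ one concludes $R=\id$ and therefore $T=S$, which is the sought Jordan isomorphism.

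I expect the main technical hurdle to be this extension of the square-zero identity from $A$ to $A^{**}$: neither the spectral radius nor the squaring map is weak-$*$ continuous, so the passage cannot be achieved by weak-$*$ density alone and must instead proceed through Kaplansky density together with joint SOT-continuity of multiplication on bounded sets.
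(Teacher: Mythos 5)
Your proposal is correct and follows essentially the same route as the paper: extend $H=e^D$ to the enveloping von Neumann algebra $A''=A^{**}$, use innerness of derivations there, and transfer the square-zero property of $\id-H$ by strong density of $A$ in $A''$ before applying Le Page's theorem. The paper merely states the density step tersely, whereas you (correctly) fill in the Kaplansky density and bounded-set SOT-continuity details.
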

\begin{proof}
Let $T$ be such a mapping. Continuing to use the above notation and results, we have $H=e^D$ for some derivation $D$ on~$A$.
We can extend $D$ and $H$ to the enveloping von Neumann algebra $A''$ to obtain a derivation $D''$, which is inner, and an automorphism $H''$
such that $H''=e^{D''}$. Consequently, $H''x=uxu^{-1}$ for an invertible element $u\in A''$ and all $x\in A''$.
Since $A$ is strongly dense in $A''$, it follows that $\id - H''$ maps into the square zero elements in~$A''$.
Thus, for each $x\in A''$, $r(x - uxu^{-1}) = 0$ and the argument is completed as in the second part of the proof of Theorem~\ref{youbeauty}.
\end{proof}

\medskip\noindent
\textbf{Acknowledgement.} The second author was supported by a grant from the National Research Foundation of South Africa (NRF Grant Number: 129692).

\end{document}